\newcommand{\newsection}[1]{\setcounter{equation}{0} \section{#1}}
\numberwithin{equation}{section}
\newtheorem{propn}{Proposition}[section]
\newtheorem{thm}[propn]{Theorem}
\newtheorem{lemma}[propn]{Lemma}
\newtheorem{remark}[propn]{Remark}
\newtheorem{remarks}[propn]{Remarks}
\newtheorem*{thm*}{Theorem}
\theoremstyle{definition}
\newcommand{\Hil}{\mathcal{H}}
\newcommand{\Z}{\mathbb{Z}_+}
\newcommand{\T}{\mathcal{T}}
\newcommand{\h}{\hat{T}}
\newcommand{\D}{\mathbb{D}}
\DeclareMathOperator{\ran}{ran}
\newcommand{\clb}{\mathcal{B}}
\newcommand{\cld}{\mathcal{D}}
\newcommand{\cle}{\mathcal{E}}
\newcommand{\clf}{\mathcal{F}}
\newcommand{\clh}{\mathcal{H}}
\newcommand{\clk}{\mathcal{K}}
\newcommand{\cll}{\mathcal{L}}
\newcommand{\clq}{\mathcal{Q}}
\newcommand{\clt}{\mathcal{T}}
\newcommand{\clx}{\mathcal{X}}
\newcommand{\cly}{\mathcal{Y}}
\newcommand{\z}{\bm{z}}
\newcommand{\w}{\bm{w}}
\newcommand{\raro}{\rightarrow}
\newcommand{\fbn}{\mathfrak{B}^n_{1,n}}
\begin{document}
	
	\title[{Dilation under Brehmer positivity}]{Isometric dilations of commuting contractions and Brehmer positivity}

	\author[Barik]{Sibaprasad Barik}
	\address{Department of Mathematics, Indian Institute of Technology Bombay, Powai, Mumbai, 400076, India}
	\email{sibaprasadbarik00@gmail.com}
	
	\author[Das] {B. Krishna Das}
	\address{Department of Mathematics, Indian Institute of Technology Bombay, Powai, Mumbai, 400076, India}
	\email{dasb@math.iitb.ac.in, bata436@gmail.com}
	
	\subjclass[2010]{ 47A20, 47A13, 47A56, 47B38, 46E22, 47B32,
32A70}
	\keywords{Brehmer positivity, isometric dilations, regular dilations, Hardy space, von Neumann inequality}
	\begin{abstract}
It is well-known that an $n$-tuple $(n\ge 3)$ of commuting contractions does not posses an isometric dilation, in general. Considering a class of 
$n$-tuple of commuting contractions satisfying certain positivity assumption, we construct their isometric dilations
and consequently establish their von Neumann inequality. The positivity 
assumption is related to Brehmer positivity and motivated 
by the study of isometric dilations of operator tuples 
in ~\cite{BDHS}.

	\end{abstract}

\maketitle

\newsection{Introduction}
Starting point of the dilation theory is the result of Sz.-Nagy which says that a Hilbert space contraction always dilates to an isometry acting on a bigger Hilbert space. More precisely, Sz.-Nagy proved the following.
\begin{thm}[\cite{NF}] Let $T$ be a contraction on a Hilbert space $\clh$. 
Then there exists a Hilbert space $\clk$ $(\clk\supset\clh)$ and an isometry $V\in\clb(\clk)$ such that
\[
T^k=P_{\clh}V^k|_{\clh}
\]
for all $k\in\mathbb Z_{+}$, where $P_{\clh}$ denotes the orthogonal projection in $\clb(\clk)$ with range $\clh$.
\end{thm}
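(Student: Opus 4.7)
The plan is to exhibit the isometric dilation by an explicit construction on a canonical enlargement of $\clh$. Since $T$ is a contraction, $I-T^*T$ is positive, so the defect operator $D_T := (I-T^*T)^{1/2}$ and the defect space $\cld_T := \overline{\ran D_T}$ are well-defined. I would take the dilation space to be
\[
\clk := \clh \oplus \bigoplus_{n=1}^{\infty} \cld_T,
\]
with $\clh$ embedded as the first summand, and define $V \in \clb(\clk)$ by
\[
V(h, d_1, d_2, \ldots) := (Th,\; D_T h,\; d_1,\; d_2, \ldots);
\]
that is, $V$ is the two-by-two block operator with $T$ in the $(1,1)$ slot, $D_T$ in the $(2,1)$ slot, the unilateral shift on $\bigoplus_{n\ge 1}\cld_T$ in the $(2,2)$ slot, and $0$ in the $(1,2)$ slot.

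The first technical step is to check that $V$ is an isometry. This reduces to the pointwise identity $\|Th\|^2 + \|D_T h\|^2 = \|h\|^2$ on $\clh$, which is nothing but the defect identity $T^*T + D_T^2 = I$ in disguise, together with the obvious isometric property of shifting in the tail summands.

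Next, I would exploit the key structural feature that the $(1,2)$ block of $V$ vanishes: this says that $\clh^{\perp}$ is $V$-invariant, equivalently that $\clh$ is invariant under $V^*$, and in fact $V^*|_{\clh} = T^*$ can be read off directly from the block matrix. Iterating gives $V^{*k}|_{\clh} = T^{*k}$ for every $k \in \Z$, and then for $h, h' \in \clh$ the short adjoint computation $\langle P_\clh V^k h, h' \rangle = \langle V^k h, h' \rangle = \langle h, V^{*k} h' \rangle = \langle T^k h, h' \rangle$ yields the required identity $P_\clh V^k|_\clh = T^k$. No serious obstacle is anticipated: the only substantive content is choosing the shape of $V$ so that the defect identity $T^*T + D_T^2 = I$ makes the block matrix isometric, after which every remaining step is a routine algebraic verification.
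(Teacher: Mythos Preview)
Your proof is correct: this is the standard Sch\"affer construction of the minimal isometric dilation, and every step you outline goes through exactly as described. Note, however, that the paper does not supply its own proof of this statement; it is quoted from \cite{NF} as a foundational result, so there is no in-paper argument to compare against.
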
	
\noindent This result is the cornerstone of the extremely useful and extensive theory of Sz.-Nagy and Foias on single contractions ~\cite{NF}. Ando~\cite{An}, generalizing Sz.-Nagy's dilation, constructed isometric dilations for pairs of commuting contractions. In other words, he constructed a pair of commuting isometries $(V_1,V_2)$ on $\clk\supseteq \clh$ corresponding 
to each pair of commuting contractions $(T_1,T_2)$ on $\clh$ such that
 \[
T_1^{m_1}T_2^{m_2}=P_{\clh}V_1^{m_1}V_2^{m_2}|_{\clh}
\]
for all $(m_1,m_2)\in\mathbb Z_{+}^2$.
Producing some counterexamples Parrott \cite{Par} showed that for  $n\geq3$, $n$-tuples of commuting contractions do not posses isometric dilations in general. This leads us to study $n$-tuples $(n\ge 3)$ of commuting contractions more closely and identify $n$-tuples of commuting contractions which posses isometric dilations. For the rest of this article, we assume that $n\ge 3$. We denote by $\T^n(\clh)$ the
 set of all $n$-tuple of commuting contractions, that is
\[
\T^n(\clh)=\{(T_1,\ldots,T_n): T_i\in\clb(\clh),  \|T_i\| \leq
1, T_i T_j = T_j T_i, 1 \leq i, j \leq n\},
\]
and for $T=(T_1,\ldots,T_n)\in\T^n(\clh)$ and $\bm{k}=(k_1,\ldots,k_n)\in\Z^n$ we define $T^{\bm{k}}=T_1^{k_1}\cdots T_n^{k_n}$. 
For Hilbert spaces $\clh$ and $\clk$ with $\clk\supset \clh$, 
an $n$-tuple of commuting isometries (respectively unitaries) $V\in\T^n(\clk)$ is called an isometric dilation (respectively unitary dilation) of $T\in\T^n(\clh)$ if
\[
T^{\bm{k}}=P_{\clh}V^{\bm{k}}|_{\clh}
\]
for all $\bm{k}\in\Z^n$. There is a growing literature exploring the interplay between dilation and positivity, and several classes of operator tuples 
under certain positivity assumptions are known to have isometric 
dilations.  See \cite{Ag}, \cite{Bre}, \cite{VV}, \cite{MV},  \cite{CV1}, \cite{CV2},  \cite{GS},  \cite{DS},  \cite{DSS},  \cite{BDHS} and \cite{BDS} for more details in the polydisc setup. The one which is relevant for us is the dilation of operator tuples under Brehmer positivity ~\cite{Bre}. An $n$-tuple of commuting contraction $T\in\T^n(\clh)$ is said to satisfy Brehmer positivity if
\[
\sum_{F\subset G}
(-1)^{|F|} T_{F} T_{F}^*\geq0
\]
for all $G\subset\{1,\ldots,n\}$, where $T_F:=T_{n_1}\cdots T_{n_k}$
for any $F=\{n_1,\ldots,n_k\}\subset\{1,\dots,n\}$ and by convention 
$T_{\emptyset}=I_{\clh}$.
We denote by $\mathfrak{B}^n(\clh)$ the class of $n$-tuples of commuting contractions on $\clh$ satisfying Brehmer positivity, that is 
\[
\mathfrak{B}^n(\clh):=\{T\in\T^n(\clh): T \,\text{satisfies Brehmer positivity}\}.
\]
It is clear from the definition that if $T\in \mathfrak{B}^n(\clh)$ then 
\[
(T_{n_1},\dots, T_{n_k})\in \mathfrak{B}^k(\clh)
\]
for any non-empty subset $\{n_1,\ldots,n_k\}$ of $\{1,\dots,n\}$.
It has been shown in ~\cite{Bre} that every $T\in \mathfrak{B}^n(\clh)$ possesses  
isometric dilation (also see ~\cite{NF}). In fact, more stronger result is true. Namely, 
$T\in \mathfrak{B}^n(\clh)$ if and only if $T$ has a $*$-regular unitary dilation $U$,
that is
\[
T^{\bm\alpha_{+}} T^{* \bm\alpha_{-}}=P_{\clh}U^{* \bm \alpha_{-}}U^{\bm \alpha_{+}}|_{\clh} 
\] 
for all  $\bm\alpha\in \mathbb Z^n$, where $\bm\alpha_{+} =(\alpha_1^{+},\dots,\alpha_n^{+}), \bm\alpha_{-}=(\alpha_1^{-},\dots,\alpha_n^{-})\in \mathbb Z_{+}^n$ and $\alpha_i^{+}:=\mbox{max}\{\alpha_i,0\}$, 
$\alpha_i^{-}:=\mbox{max}\{-\alpha_i,0\}$. 
The aim of this article is to exhibit a class of $n$-tuple of commuting 
contractions having isometric dilations so that the class is larger than the class of $n$-tuples of commuting contractions satisfying Brehmer positivity. To describe the class of operator tuples
 succinctly, we adopt the following notation.
 For $T=(T_1,\dots,T_n)\in\T^n(\clh)$ and  $1 \leq i \leq n$, we define
\[
\hat{T}_{i} := (T_1,\dots,T_{i-1}, T_{i+1},\dots,T_{n})
\in \T^{n-1}(\clh),
\]
the $(n-1)$-tuple obtained from $T$ by deleting $T_i$.
The class we consider in this article is
denoted by $\mathfrak{B}^n_{p,q}(\clh)$ for some 
$1\leq p<q\leq n$ and defined as 
\[
\mathfrak{B}^n_{p,q}(\clh):=\{T=(T_1,\ldots,T_n)\in\clt^n(\clh):\, \hat{T}_p, \hat{T}_{q}\in \mathfrak{B}^{n-1}(\clh)\}.
\]	
From the definition it is clear that $\mathfrak{B}^n(\clh)\subset \mathfrak{B}^n_{p,q}(\clh)$ and the containment can be shown to be proper.
By means of an explicit construction, we show that 
$T\in \mathfrak{B}^n_{p,q}(\clh)$ if and only if $T$ 
has  an isometric dilation $V$ such that $\hat{V}_p$ and $\hat{V}_q$ are $*$-regular 
isometric dilation of $\hat{T}_p$ and $\hat{T}_q$, respectively. 
For the base case ($n=3$), the existence of isometric dilations for $\mathfrak{B}^3_{1,3}(\clh)$ is obtained by Gasper and Suciu in \cite[Theorem 12]{GS} . However our proof, for this particular case, is completely different than that of
~\cite{GS} and is based on an explicit construction of dilating isometries.
The present consideration is also motivated by ~\cite{BDHS} where 
the authors considered the following class of operator tuples  
\begin{equation}\label{tnpq}
\clt^n_{p,q}:=\{T\in \clt^n(\clh): \hat{T}_p\ \text{ and } \hat{T}_q\ 
\text{ satisfy Szeg\"o positivity  and } \hat{T}_q\ \text{ is pure}\}
\end{equation}
and found their isometric dilations explicitly.
We say that $T\in \clt^n(\clh)$ satisfies Szeg\"o positivity if 
\[
\sum_{F\subset \{1,\dots,n\}}
(-1)^{|F|} T_{F} T_{F}^*\geq0,
\]
and we say $T$ is pure if $T_i^{* m}h\to 0$ as $m\to\infty$ for all 
$h\in\clh$ and $i=1,\dots,n$. It is easy to see
that if $T \in\clt^n(\clh)$ is pure and satisfies Szeg\"o positivity 
then it satisfies Brehmer positivity. Thus it is evident 
that if $T\in \clt^n_{p,q}(\clh)$ and $T$ is pure then 
$T\in \mathfrak{B}^n_{p,q}(\clh)$. From this point of view,
 the present work is also 
a generalization of ~\cite{BDHS}.

An added benefit of this consideration is the von Neumann inequality 
for the class $\mathfrak{B}^n_{p,q}(\clh)$. If $T\in \clt^n(\clh)$ has an 
isometric dilation, then it is immediate that $T$ satisfies the 
von Neumann inequality, that is for all
 $p\in \mathbb{C}[z_1, \ldots, z_n]$	
\[
\|p(T)\|\le \mbox{sup}_{\z\in\D^n}|p(\z)|,
\]
where $\mathbb{D}^n$ is the open unit $n$-polydisc in $\mathbb{C}^n$. Thus, as an immediate 
consequence of our isometric dilations, we obtain that each tuple in $\mathfrak{B}^n_{p,q}(\clh)$ satisfies von Neumann inequality. 
It is worth mentioning here that von Neumann inequality does
 not hold in general for $n$-tuple of commuting contractions 
 (see \cite{V} and \cite{CD1}). 
 More details on von Neumann inequality for $n$-tuple of commuting contractions can be found in \cite{CD}, \cite{Ho1}, \cite{Ho2}, \cite{K1} and \cite{Pi}.

Rest of the paper is organized as follows. In the next section we develop some background material and state some known results which are relevant in the present context. Section ~\ref{class} deals with isometric dilations and von Neumann inequality for the class $\mathfrak{B}^n_{p,q}(\clh)$.

\section{Preliminaries}
 
In most of the cases, isometric dilations of tuple of commuting contractions are isometric co-extensions.  However, we use 
co-extension as an intermediate step to obtain isometric dilations in the 
present context.
  Let $\clh$ and $\clk$ be Hilbert spaces, and let $T \in \T^n(\clh)$
 and $V$ be an $n$-tuple of contractions on $\clk$. We say that $V$
 is a \textit{co-extension} of $T$ if there exists an isometry $\Pi: \clh
 \raro \clk$ such that
 \begin{equation}\label{dileq}
 \Pi T_i^{*} = V_i^{*} \Pi,
 \end{equation}
 for all $i=1,\ldots,n$. In addition,
 if 
 \[
 \clk=\overline{\mbox{span}}\{V^{\bm k}h:  \bm k\in \mathbb Z_{+}^n, h\in 
 \mbox{ran}\, \Pi\},
 \]
 then we say  $V$ is a \emph{minimal co-extension} of $T$.
 We warn the reader here that we do not require $V$ to be a commuting 
 tuple. 
 Let $\clq=\ran\Pi$. Then, by equation (\ref{dileq}),
 \[
 \Pi T_i \Pi^*=(\Pi T_i \Pi^*)(\Pi \Pi^*)
 =(\Pi \Pi^* V_i)(\Pi \Pi^*) 
 =P_{\clq}V_i|_{\clq}.
 \]
This implies that $(P_{\clq}V_1|_{\clq},\ldots,P_{\clq}V_n|_{\clq})$ 
is a commuting tuple of contractions even if $V$ is not an commuting tuple of contractions. Moreover,
$(T_1,\ldots,T_n)$ is unitary equivalent to $(P_{\clq}V_1|_{\clq},\ldots,P_{\clq}V_n|_{\clq})$.
Also, 
\[
(P_{\clq}V_i|_{\clq})^*=\Pi T_i^*\Pi^*=V_i^*\Pi\Pi^*=V_i^*|_{\clq},
\]
which implies, $(V_1,\ldots,V_n)$ is a co-extension of $(P_{\clq}V_1|_{\clq},\ldots,P_{\clq}V_n|_{\clq})$ $(\cong (T_1,\ldots,T_n))$.

For a Hilbert space $\cle$, the $\cle$-valued Hardy space over $\D^n$
is denoted by $H_{\cle}^2(\D^n)$ and defined as the space of all $\cle$-valued analytic functions $f=\sum_{\bm{k}\in\Z^n}a_{\bm{k}}\z^{\bm{k}}$ $(a_{\bm{k}}\in\cle)$ on $\D^n$ such that
\[
\sum_{\bm{k}\in\Z^n}\|a_{\bm{k}}\|^2<\infty.
\]
The space $H_{\cle}^2(\D^n)$ is a reproducing kernel Hilbert space with kernel $\mathbb{S}_nI_{\cle}$ where $\mathbb{S}_n$ is the Szeg\"o kernel on the polydisc $\D^n$ given by 
\[
\mathbb{S}_n(\z,\w)=\prod_{i=1}^n(1-z_i\bar{w_i})^{-1} \quad(\z, \w \in \D^n).
\]
The $n$-tuple of shifts $(M_{z_1}, \ldots, M_{z_n})$ on $H^2_{\cle}(\D^n)$ is defined by 
\[
(M_{z_i} f)(\w) = w_i f(\w)\quad(\ \bm{w}\in\D^n),
\]
for all $i=1,\dots,n$ and is a tuple of commuting isometries.
For $T\in\T^n(\clh)$, we say $T\in\T^n(\clh)$ satisfies Szeg\"o positivity if $\mathbb{S}_n^{-1}(T,T^*)\geq0$, where
\[
\mathbb{S}_n^{-1}(T,T^*):=\sum_{F\subset\{1,\ldots,n\}}
(-1)^{|F|} T_{F} T_{F}^*.
\]
In such a case, we define the defect operator and the defect spaces as 
\[
	D_T:=\mathbb{S}_n^{-1}(T,T^*)^{1/2}\quad\text{and}\quad\cld_T:=\overline{\ran}\,\,\mathbb{S}_n^{-1}(T,T^*),
\]
respectively. The map  $\Pi: \clh\to H^2_{\cld_T}(\D^n)$ defined by
	\begin{equation}\label{szegodil}
	(\Pi h)(\z)=\sum_{\bm{k}\in\Z^n}\z^{\bm{k}}\otimes D_T T^{*\bm{k}}h\quad(\z\in\D^n)
	\end{equation}
is called the \textit{canonical dilation map} corresponding 
to a Szeg\"o  tuple $T\in\clt^n(\clh)$ and it satisfies 
\[
	\Pi T_i^*=M_{z_i}^*\Pi
	\]
	for all $i=1,\ldots,n$, and 
	\[
	\|\Pi h\|^2=\lim_{\bm{k}\to\infty}\sum_{F\subset\{1,\ldots,n\}}(-1)^{|F|}\|T_F^{* \bm{k}}h\|^2
	\]
	for all $h\in\clh$. In addition, if $T$ is pure then by the above identity
	$\Pi$ is an isometry and therefore the $n$-tuple of shift $(M_{z_1},\dots,M_{z_n})$ on $H^2_{\cld_T}(\D^n)$ is a co-extension of $T$. In particular, a pure Szeg\"o
$n$-tuple $T$ dilates to the $n$-tuple of shifts on $H^2_{\cld_T}(\D^n)$ (see ~\cite{CV1} and ~\cite{MV}).  A pure Szeg\"o tuple 
$T\in \clt^n(\clh)$ is also a member of $\mathfrak{B}^n(\clh)$, that is 
$T$ satisfies Brehmer positivity. Thus the class $\mathfrak{B}^n(\clh)$
is larger than the class of pure and Szeg\"o $n$-tuples on $\clh$. 
Recall that an isometric dilation $V\in\clt^n(\clk)$ of $T\in\clt^n(\clh)$ is 
$*$-regular (respectively regular) if it satisfies  
\[
P_{\clh}V^{*\bm\alpha_-}V^{\bm\alpha_+}|_{\clh}= T^{\bm\alpha_+}T^{* \bm\alpha_-}\quad(\bm\alpha\in\mathbb{Z}^n)
\]
or respectively,
\[
P_{\clh}V^{* \bm\alpha_-}V^{\bm\alpha_+}|_{\clh}= T^{* \bm\alpha_-}T^{\bm\alpha_+}\quad(\bm\alpha\in\mathbb{Z}^n).
\]
It is well-known that $T\in \mathfrak{B}^n(\clh)$ if and only if $T$ has a $*$-regular isometric dilation (see~\cite{NF}). For explicit constructions of $*$-regular isometric dilations for the class $\mathfrak{B}^n(\clh)$ see ~\cite{AM} and ~\cite{Tim}.
Simply taking adjoint, it is evident that  $T^*\in \mathfrak{B}^n(\clh)$ if and only if 
$T$ has a regular isometric dilation. For example, if $(V,T)$ is a pair 
of commuting contractions on $\clh$ and if $V$ is an isometry then 
$(V^*,T^*)\in \mathfrak{B}^2(\clh)$ and therefore $(V,T)$ has a regular isometric dilation. We end the section with a lemma which will be useful 
for us. The lemma may be known to experts and we are unable to find any suitable reference for it, and that is why we include a proof. We say that an operator tuple $T\in\clt^n(\clh)$ is doubly commuting if $T_iT_j^*=T_j^*T_i$ for all 
$1\le i<j\le n$.
\begin{lemma}\label{doubly commuting lifting}
Let $(W_1,W_2)$ on $\clk$ be a minimal regular unitary dilation 
of $(T_1,T_2)$ on $\clh$. Suppose $(S_1,\dots, S_d)$ is a tuple of isometries on $\clh$ such that $(T_1, S_1,\dots, S_d)$ and
 $(T_2,S_1,\dots, S_d)$ are doubly commuting. Then there exists 
 a commuting tuple of isometries $(U_1,\dots, U_d)$ on $\clk$ such 
 that $(W_1,W_2, U_1,\dots, U_d)$ is an isometric dilation of the tuple 
 $(T_1,T_2, S_1,\dots, S_d)$ and that $(W_1, U_1,\dots, U_d)$ and 
 $(W_2,U_1,\dots, U_d)$ are doubly commuting.
  
 Moreover, $(U_1,\dots, U_d)$ is both an extension and a co-extension 
 of $(S_1,\dots, S_d)$.
\end{lemma}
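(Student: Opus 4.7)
My plan is to define $U_l$ explicitly on a dense subspace of $\clk$ and then verify the required properties in turn. By minimality of the regular unitary dilation, $\clk = \overline{\mbox{span}}\{W_1^{m_1} W_2^{m_2} h : (m_1, m_2) \in \Int^2,\ h \in \clh\}$. For each $l = 1, \dots, d$ I would set
\[
U_l\bigl(W_1^{m_1} W_2^{m_2} h\bigr) := W_1^{m_1} W_2^{m_2} S_l h
\]
and extend linearly. The heart of the proof is to show this is a well-defined isometry. Since $W_1, W_2$ are commuting unitaries, the $\clk$-inner product of two elements of the span collapses to an expression of the form $\langle W_1^a W_2^b h, g\rangle$ with $h, g \in \clh$ and $a, b \in \Int$, so the isometry property reduces to the identity
\[
\langle W_1^a W_2^b S_l h, S_l g\rangle = \langle W_1^a W_2^b h, g\rangle.
\]
Splitting $a = a_+ - a_-$ and $b = b_+ - b_-$, the regular dilation identity $P_\clh W_1^{*a_-} W_2^{*b_-} W_1^{a_+} W_2^{b_+}|_\clh = T_1^{*a_-} T_2^{*b_-} T_1^{a_+} T_2^{b_+}$ reduces both sides to inner products on $\clh$ built from $T_j^{\pm}$ and $S_l^{\pm}$; the double commutativity of $(T_j, S_l)$ then lets me push $S_l^*$ across the $T_j$-string to pair with $S_l$ via $S_l^* S_l = I_\clh$, giving the required equality. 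This simultaneously yields well-definedness and the isometric extension of $U_l$ to $\clk$.

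Once $U_l$ is defined, commutativity of the $U_l$'s among themselves and with $W_1, W_2$ is immediate on the dense span from the commutativity of $(T_1, T_2, S_1, \dots, S_d)$ and the definition. Double commutativity of $(W_j, U_l)$ reduces to $U_l W_j^* = W_j^* U_l$, which holds on the span because $W_j^*$ simply decrements the $j$-th exponent while $U_l$ inserts $S_l$ on the $\clh$-factor. Taking $m_1 = m_2 = 0$ gives $U_l|_\clh = S_l$, so $(U_1, \dots, U_d)$ extends $(S_1, \dots, S_d)$.

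For $h \in \clh$, $\alpha \in \Z^2$ and $\beta \in \Z^d$, the extension property yields $U^\beta h = S^\beta h \in \clh$, so the dilation relation for $(W_1, W_2)$ gives $P_\clh W^\alpha U^\beta h = P_\clh W^\alpha(S^\beta h) = T^\alpha S^\beta h$, verifying that $(W_1, W_2, U_1, \dots, U_d)$ is the claimed isometric dilation of $(T_1, T_2, S_1, \dots, S_d)$. For the co-extension assertion I would show $U_l^* h = S_l^* h \in \clh$ for every $h \in \clh$: pairing both candidates against a typical $W_1^{m_1} W_2^{m_2} g_0$ with $g_0 \in \clh$ and invoking the regular dilation identity together with double commutativity of $(T_j, S_l)$ reduces both inner products to the same expression $\langle T_1^{*m_1^+} T_2^{*m_2^+} T_1^{m_1^-} T_2^{m_2^-} S_l^* h, g_0\rangle_\clh$, so they agree on the dense span.

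The main obstacle is the isometry step: converting the $\clk$-inner product of two $W$-translates of elements of $\clh$ into an expression on $\clh$ via the regular dilation identity, while simultaneously moving $S_l^*$ past the $T_j$'s using the doubly commuting hypothesis and keeping careful track of exponent signs. Once this identity is in hand, every other property of $U_l$ is a short direct check.
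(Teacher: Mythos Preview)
Your proposal is correct and follows essentially the same route as the paper's proof: define $U_l(W^{\bm k}h)=W^{\bm k}S_lh$ on the minimal span, verify it is a well-defined isometry by collapsing $\langle W^{\bm k}S_lh, W^{\bm m}S_lg\rangle$ to an $\clh$-inner product via the regular dilation identity and then cancelling $S_l^*S_l$ using the doubly commuting hypothesis, and finally read off the extension, commutation, dilation, and co-extension properties. The only point you leave implicit (as does the paper) is that the $U_l$'s doubly commute among themselves; this follows immediately once you have $U_l^*|_\clh=S_l^*$ and $U_lW_j^*=W_j^*U_l$, since then $U_i^*(W^{\bm k}h)=W^{\bm k}S_i^*h$ and the doubly commuting of the $S_l$'s transfers directly.
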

\begin{proof}
	Let $W=(W_1,W_2)$. Since $W$ on $\clk$ is the minimal unitary dilation of $T=(T_1,T_2)$,  
	\begin{equation}\label{minimality}
	\clk=\overline{\mbox{span}}\{W^{\bm k}h:  \bm k\in \mathbb Z^2, h\in 
	\clh\}.
	\end{equation}
For $1\le i\le d$, $h_1,\dots,h_r\in \clh$ and $\bm k^1,\dots,\bm k^r\in\mathbb Z^2$, we have 
\begin{align*}
\big\|\sum_{l=1}^r W^{\bm k^l}S_i h_l\big\|^2& =\sum_{l,m=1}^r\langle 
W^{ *\bm k^m}W^{\bm k^l}S_ih_l, S_ih_m\rangle\\
& = \sum_{l,m=1}^r\langle 
W^{ * (\bm k^l-\bm k^m)_{-}}W^{ (\bm k^l-\bm k^m)_{+}}S_ih_l, S_ih_m\rangle\\
&= \sum_{l,m=1}^r\langle 
T^{ * (\bm k^l-\bm k^m)_{-}}T^{ (\bm k^l-\bm k^m)_{+}}S_ih_l, S_ih_m\rangle\ [\text{by regular dilation}]\\
&=  \sum_{l,m=1}^r\langle 
T^{ * (\bm k^l-\bm k^m)_{-}}T^{ (\bm k^l-\bm k^m)_{+}}h_l, h_m\rangle,\
[\text{by doubly commuting property}]
\end{align*}
and by a similar calculation we also have 
\[
\big\|\sum_{l=1}^r W^{\bm k^l} h_l\big\|^2= \sum_{l,m=1}^r\langle 
T^{ * (\bm k^l-\bm k^m)_{-}}T^{ (\bm k^l-\bm k^m)_{+}}h_l, h_m\rangle.
\]
This shows that for all $h_1,\dots,h_r\in \clh$ and $\bm k^1,\dots,\bm k^r\in\mathbb Z^2$,
\[
\big\|\sum_{l=1}^r W^{\bm k^l}S_i h_l\big\|^2=\big\|\sum_{l=1}^r W^{\bm k^l} h_l\big\|^2
\]
for any $1\le i\le d$. Hence, by the minimality of the unitary dilation ~\eqref{minimality}, we have isometry $U_i: \clk\to \clk$
defined by 
\[
U_i(W^{\bm k}h)=W^{\bm k}S_ih\quad(h\in\clh, \bm k\in\mathbb{Z}^2),
\]
for all $i=1,\dots,d$. It is easy to see that 
\[U_i|_{\clh}= S_i, \ U_iU_j=U_jU_i \text{ and } U_i W_m=W_mU_i \]
for all $i,j=1,\dots,d$ and $m=1,2$. 
Consequently, for all $\bm\alpha\in\Z^2$ and for all $(m_1,\ldots,m_d)\in\Z^d$,
	\[
	P_{\clh}W^{\bm\alpha}U_1^{m_1}\cdots U_d^{m_d}|_{\clh}=P_{\clh}W^{\bm\alpha}|_{\clh}(S_1^{m_1}\cdots S_d^{m_d})=T^{\bm\alpha}S_1^{m_1}\cdots S_d^{m_d}.
	\]	
Thus $(W_1,W_2,U_1,\dots, U_d)$ is an isometric dilation of 
$(T_1,T_2,S_1,\dots, S_d)$. It remains to show that $(U_1,\dots, U_d)$
is a co-extension of $(S_1,\dots, S_d)$.
To this end,  for all $h,h'\in\clh$ and $\bm k\in\mathbb{Z}^2$,
	\[
	\langle U_i^*h, W^{\bm k}h'\rangle=\langle h, P_{\clh}W^{\bm k}S_ih'\rangle=\langle h, P_{\clh}W^{*\bm k_-}W^{\bm k_+}S_ih'\rangle=\langle h, T^{*\bm k_-}T^{\bm k_+}S_ih'\rangle, \ (1\le i\le d)
	\]
and using the fact that $S_i$ doubly commutes with $T$, we have
	\[
	\langle h, T^{*\bm k_-}T^{\bm k_+}S_ih'\rangle=\langle h, S_iT^{*\bm k_-}T^{\bm k_+}h'\rangle=\langle S_i^*h, P_{\clh} W^{*\bm k_-}W^{\bm k_+}h'\rangle=\langle S_i^*h,  W^{\bm k}h'\rangle\ (1\le i\le d).
	\]
Thus, $U_i^*h=S_i^*h$, for all $h\in\clh$ and for all $i=1,\dots,d$. 
Hence the proof follows.
\end{proof}


\section{Isometric dilations and von Neumann inequality for $\mathfrak{B}^n_{p,q}(\clh)$}\label{class}
The aim of this section is to construct isometric dilations for the class
$\mathfrak{B}^{n}_{p,q}(\clh)$ for some $1\le p<q\le n$.
For simplicity we fix $p=1,q=n$ and construct isometric dilations for the class $\mathfrak{B}^n_{1,n}(\clh)$. This simplification is harmless as by a suitable rearrangement any operator tuple in $\mathfrak{B}^{n}_{p,q}(\clh)$ can be viewed as a member of $\mathfrak{B}^{n}_{1,n}(\clh)$.
First we introduce some notations which will be followed throughout
the paper. 

For $T\in\clt^n(\clh)$ and for each non-empty order subset $G=\{j_1,\ldots,j_r\}$ of $\{1,\ldots,n\}$, we define
\[
T(G):=(T_{j_1},\ldots,T_{j_r}).
\] 
With the above notation, if $T\in \mathfrak{B}^n(\clh)$ then 
$T(G)\in  \mathfrak{B}^{|G|}(\clh)$ for any non-empty subset $G$ of $\{1,\dots,n\}$, and therefore we define the corresponding 
defect operator and defect spaces as 
\[
D_{T,G}:=\mathbb{S}_r^{-1}(T(G),T(G)^*)^{1/2}=\Big(\sum_{F\subset G}
(-1)^{|F|} T_{F} T_{F}^*\Big)^{1/2}\quad\text{and}\quad \cld_{T,G}:=\overline{\ran}\,D_{T,G}.
\]
For $T\in\clt^n(\clh)$, we also set
\[
\hat{T}_{1n}:=(T_1T_n,T_2,\ldots,T_{n-1}),
\]
the $(n-1)$-tuple in $\clt^{n-1}(\clh)$ obtained from $T$ by removing $T_n$ and replacing $T_1$ by $T_1T_n$. Next we observe 
a crucial property of $\hat{T}_{1n}$ when $T\in \mathfrak{B}^n_{1,n}(\clh)$. A similar result is also observed in ~\cite[Lemma 5.1]{BDHS} in the context of 
Szeg\"o positivity.
\begin{lemma}\label{combine}
Let $T=(T_1,\ldots,T_n)\in\fbn(\clh)$. Then  $\hat{T}_{1n}=(T_1T_n,T_2,\ldots,T_{n-1})\in\mathfrak{B}^{n-1}(\clh)$.
\end{lemma}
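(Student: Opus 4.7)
The plan is to verify directly that, for every subset $G\subset\{1,\ldots,n-1\}$, the Brehmer sum for $\hat T_{1n}=(T_1T_n,T_2,\ldots,T_{n-1})$ on $G$ is positive. I would index the entries of $\hat T_{1n}$ by $\{1,\dots,n-1\}$ and split into two cases depending on whether the first slot (the one containing $T_1T_n$) belongs to $G$ or not. The case $1\notin G$ is trivial: the products $(\hat T_{1n})_F$ coincide with the products $T_F$ of entries of the original tuple $T$, so the inequality is precisely the Brehmer positivity of $\hat T_n=(T_1,\ldots,T_{n-1})$ on $G$, which holds by hypothesis (equivalently, the same inequality is assured by $\hat T_1\in\mathfrak{B}^{n-1}(\clh)$).

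The interesting case is $1\in G$. I would write $G=\{1\}\cup G'$ with $G'\subset\{2,\ldots,n-1\}$ and put
\[
A:=\sum_{F'\subset G'}(-1)^{|F'|}T_{F'}T_{F'}^*.
\]
Using commutativity of $T_1T_n$ with each $T_{F'}$, the Brehmer sum becomes
\[
\sum_{F\subset G}(-1)^{|F|}(\hat T_{1n})_F(\hat T_{1n})_F^* = A-T_1T_n\,A\,T_n^*T_1^*.
\]
The heart of the argument is the algebraic identity
\[
A-T_1T_n\,A\,T_n^*T_1^* = \bigl(A-T_1AT_1^*\bigr)+T_1\bigl(A-T_nAT_n^*\bigr)T_1^*,
\]
which one checks by adding and subtracting $T_1AT_1^*$. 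Now $A-T_1AT_1^*\ge 0$ is exactly the Brehmer inequality for $\hat T_n\in\mathfrak{B}^{n-1}(\clh)$ applied to the subset $\{1\}\cup G'\subset\{1,\ldots,n-1\}$, while $A-T_nAT_n^*\ge 0$ is the Brehmer inequality for $\hat T_1\in\mathfrak{B}^{n-1}(\clh)$ applied to $\{n\}\cup G'\subset\{2,\ldots,n\}$; both hold by the defining assumption $T\in\fbn(\clh)$. Since conjugation by $T_1$ preserves positivity, the right-hand side is a sum of positive operators and hence non-negative.

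The only real obstacle is spotting the identity above; once it is written down, the proof is a direct bookkeeping. Note that the two summands invoke the two different hypotheses in $\fbn(\clh)$ in a genuinely essential way, so this decomposition is the natural bridge between the positivity assumptions on $\hat T_1$ and $\hat T_n$ and the desired positivity for the merged tuple $\hat T_{1n}$.
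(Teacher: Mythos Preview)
Your proof is correct and follows essentially the same approach as the paper: split according to whether $1\in G$, and in the nontrivial case rewrite the Brehmer sum $A-T_1T_nAT_n^*T_1^*$ as a sum of a defect of $\hat T_n$ and a conjugated defect of $\hat T_1$. The only cosmetic difference is that you add and subtract $T_1AT_1^*$ to obtain $(A-T_1AT_1^*)+T_1(A-T_nAT_n^*)T_1^*$, whereas the paper adds and subtracts $T_nAT_n^*$ to get the symmetric decomposition $(A-T_nAT_n^*)+T_n(A-T_1AT_1^*)T_n^*$; the paper in fact records both versions immediately after the lemma.
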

\begin{proof}
First note that for $G\subset \{1,\ldots,n-1\}$ with $1\notin G$,
\[
\sum_{F\subset G}
(-1)^{|F|} (\hat{T}_{1n})_{F} (\hat{T}_{1n})_{F}^*= \sum_{F\subset G}
(-1)^{|F|} (\hat{T}_n)_{F} (\hat{T}_n)_{F}^*\geq 0.
\]
On the other hand, for $G\subset \{1,\ldots,n-1\}$ with $1\in G$, it can be checked 
that 
\begin{equation}\label{defect identity 1}
\sum_{F\subset G}
(-1)^{|F|} (\hat{T}_{1n})_{F} (\hat{T}_{1n})_{F}^*= \sum_{F\subset G}
(-1)^{|F|} (\hat{T}_1)_{F} (\hat{T}_1)_{F}^*+ T_n\big( \sum_{F\subset G}
(-1)^{|F|} (\hat{T}_n)_{F} (\hat{T}_n)_{F}^*\big)T_n^*\ge 0.
\end{equation}
This completes the proof.
\end{proof}

The key observation in the above lemma is the identity ~\eqref{defect identity 1} which we use repeatedly in this article. We rewrite the identity 
in terms of defect operators as follows. For $T\in \mathfrak{B}^n_{1,n}(\clh)$ and $G\subset \{1,\ldots,n-1\}$ with $1\in G$ we have
\begin{equation}\label{defect identity 2}
D^2_{\hat{T}_n,G}+T_1D^2_{\hat{T}_1,G}T_1^*=
D^2_{\hat{T}_{1n},G}=D^2_{\hat{T}_1,G}+T_nD^2_{\hat{T}_n,G}T_n^*.
\end{equation}
By the above lemma, if $T\in\fbn(\clh)$ then $\hat{T}_{1n}\in\mathfrak{B}^{n-1}(\clh)$ and therefore for all $G=\{j_1,\ldots,j_{|G|}\}\subset\{1,\ldots,n-1\}$, $\hat{T}_{1n}(G)\in\mathfrak{B}^{|G|}(\clh)$. We denote the corresponding canonical dilation map, as in 
~\eqref{szegodil}, of 
$\hat{T}_{1n}(G)$ by $\Pi_G$, that is 
$\Pi_G: \clh\to H^2_{\cld_{\hat{T}_{1n},G}}(\D^{|G|})$ defined by 
\begin{equation}\label{dilG}
(\Pi_Gh)(\z)=\sum_{\bm{k}\in\Z^{|G|}}\z^{\bm{k}}\otimes D_{\hat{T}_{1n},G}\hat{T}_{1n}(G)^{*\bm{k}}h\quad (h\in\clh, \z\in\D^{|G|})
\end{equation}
such that 
\begin{align*}
\Pi_GT_{j_i}^*=M_{z_i}^*\Pi_G
\end{align*}
for all $i=1,\dots,|G|$.
In particular if  $G=\{1=j_1,\dots, j_{|G|}\}$, then 
\begin{align*}
\Pi_GT_{1}^*T_n^*=M_{z_1}^*\Pi_G
\quad
\text{and}\quad\Pi_GT_{j_i}^*=M_{z_i}^*\Pi_G
\end{align*}
for all $i=2,\dots,|G|$.
The next lemma will be the key to factorize $M_{z_1}$ further as 
a product of $M_{\Phi}$ and $M_{\Psi}$ so that we can split the 
first intertwining relation above into 
\[\Pi_G T_1^*= M_{\Phi}^*\Pi_G\ \text{ and } \Pi_GT_n^*=M_{\Psi}^*\Pi_G\]
where $\Phi$ and $\Psi$ are inner multipliers on $\D^{|G|}$ with their 
product being $z_1$. Such a pair of multipliers $(\Phi, \Psi)$ is known as BCL pair and it turns out that the only way one can factorize a shift is through BCL pair (see ~\cite{BCL} and see ~\cite{DSS} for its counterpart in the context of pure contractions).

\begin{lemma}\label{factorization}
Let $T=(T_1=RS, T_2,\ldots, T_n)\in\mathfrak{B}^n(\clh)$ for some 
commuting contractions $R,S\in \mathcal B (\clh)$ and let $G\subset\{1,\ldots,n\}$. Suppose that there exist bounded operators $F_1$ and $F_2$ on $\clh$, Hilbert spaces $\clf_1$ and $\clf_2$ with $\clf_i\supseteq \overline{\ran}\,F_i$ $(i=1,2)$, a Hilbert space $\cle$,
an isometry $\Gamma_G:\cld_{T,G}\to \cle$ and unitaries 
\[
U_i=\begin{bmatrix}
A_i & B_i\\
C_i & 0
\end{bmatrix}: \cle\oplus\clf_i\to\cle\oplus\clf_i \quad (i=1,2)
\]
satisfying 
\[U_1(\Gamma_GD_{T,G}h,F_1R^*S^*h)=(\Gamma_GD_{T,G}R^*h,F_1h)\]
and
\[U_2(\Gamma_GD_{T,G}h,F_2R^*S^*h)=(\Gamma_GD_{T,G}S^*h,F_2h)\]
for all $h\in\clh$, then 
\begin{align*}
(I\otimes \Gamma_G)\Pi_GR^*=M^*_{\Phi_1}(I\otimes \Gamma_G)\Pi_G\quad
\text{and}\quad(I\otimes \Gamma_G)\Pi_GS^*=M^*_{\Phi_2}(I\otimes \Gamma_G)\Pi_G
\end{align*}
where $\Phi_i(\z)= A_i^*+z_1C_i^*B_i^*$ $(\z\in\D^{|G|})$ is the transfer 
function of the unitary $U_i^*$ for all $i=1,2$ and $\Pi_G$ is the canonical dilation map of $T(G)$, as in ~\eqref{dilG}. 
	
\end{lemma}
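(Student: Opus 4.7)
The plan is to verify the first intertwining relation $(I\otimes\Gamma_G)\Pi_G R^* = M^*_{\Phi_1}(I\otimes\Gamma_G)\Pi_G$ by comparing Taylor coefficients in $H^2_{\cle}(\D^{|G|})$; the proof for the pair $(S^*, \Phi_2)$ is identical after the obvious relabelling. Since $\Phi_1(\bm z) = A_1^* + z_1 C_1^* B_1^*$ depends only on the first variable $z_1$, a direct adjoint computation shows that for $f = \sum_{\bm k \in \Z^{|G|}} a_{\bm k}\bm z^{\bm k} \in H^2_{\cle}(\D^{|G|})$,
\[
(M_{\Phi_1}^* f)_{\bm k} \;=\; A_1 a_{\bm k} + B_1 C_1\, a_{\bm k + e_1},\qquad e_1 := (1,0,\ldots,0).
\]
Assuming, as intended, that $1\in G$ is its first element, $T_1 = RS$ occupies the first slot of $T(G)$, so the $\bm k$-th Taylor coefficient of $(I\otimes\Gamma_G)\Pi_G h$ is $\Gamma_G D_{T,G} T(G)^{*\bm k}h$ and $T(G)^{*(\bm k + e_1)} = T_1^* T(G)^{*\bm k} = R^*S^*\, T(G)^{*\bm k}$.

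Next, writing out the block form of the unitary identity for $U_1$ componentwise yields, for every $h \in \clh$,
\begin{align*}
A_1 \Gamma_G D_{T,G} h + B_1 F_1 R^* S^* h &= \Gamma_G D_{T,G} R^* h,\\
C_1 \Gamma_G D_{T,G} h &= F_1 h.
\end{align*}
Applying the second identity at $R^* S^* h$ gives $F_1 R^* S^* h = C_1 \Gamma_G D_{T,G} R^* S^* h = C_1 \Gamma_G D_{T,G} T_1^* h$, which I substitute into the first to obtain the key operator identity
\[
A_1 \Gamma_G D_{T,G} + B_1 C_1 \Gamma_G D_{T,G}\, T_1^* \;=\; \Gamma_G D_{T,G} R^*\qquad\text{on }\clh.
\]

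With this in hand, equating the $\bm k$-th Taylor coefficients on the two sides of the desired relation reduces to
\[
\Gamma_G D_{T,G} T(G)^{*\bm k} R^* h \;=\; A_1 \Gamma_G D_{T,G} T(G)^{*\bm k} h + B_1 C_1 \Gamma_G D_{T,G}\, T_1^* T(G)^{*\bm k} h,
\]
which follows by applying the operator identity just displayed at the vector $T(G)^{*\bm k} h$, together with the commutativity of $R$ with every coordinate of $T(G)$ (which holds in the intended applications), yielding $R^* T(G)^{*\bm k} = T(G)^{*\bm k} R^*$. The main subtle point is really just the coefficient-level action of $M_{\Phi_1}^*$: because $\Phi_1$ has no dependence on $z_j$ for $j\geq 2$, the operator $M_{\Phi_1}^*$ shifts only in the $z_1$ direction, which is exactly what is needed to encode the factorization $T_1 = RS$ into two separate intertwiners $M_{\Phi_1}^*, M_{\Phi_2}^*$; everything else is bookkeeping built on the two block-row identities extracted from $U_1$ (and, symmetrically, $U_2$).
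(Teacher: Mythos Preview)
Your proof is correct and follows essentially the same approach as the paper's: both extract the two block-row identities from $U_1$, combine them into the key relation $\Gamma_G D_{T,G} R^* = A_1 \Gamma_G D_{T,G} + B_1 C_1 \Gamma_G D_{T,G}\,R^*S^*$, and then match Taylor coefficients (the paper phrases this via inner products against $\z^{\bm l}\otimes\eta$, which is equivalent). Your explicit flagging of the commutativity $R^* T(G)^{*\bm k} = T(G)^{*\bm k} R^*$ is in fact an assumption the paper also uses tacitly in its last displayed equality, so you are being more careful than the original on that point.
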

\begin{proof}
Because of the symmetric roles of $R$ and $S$, we only prove
that $(I\otimes \Gamma_G)\Pi_G R^* = M_{\Phi_1}^* (I\otimes \Gamma_G)\Pi_G$. Let
$\bm{l} \in \Z^{|G|}$ and $\eta\in\cle$. Then for all $h\in\clh$
\[
\begin{split}
\langle (I\otimes \Gamma_G)\Pi_G R^*h, {\z}^{\bm{l}} \otimes \eta \rangle & = \langle (I \otimes \Gamma_G) \sum_{\bm{k} \in \Z^{|G|}}
{\z}^{\bm{k}} \otimes D_{T,G} T(G)^{* \bm{k}} R^* h,
{\z}^{\bm{l}} \otimes \eta \rangle\\
& = \langle \Gamma_G D_{T,G} T(G)^{* \bm{l}} R^* h, \eta\rangle.
\end{split}
\]
Since
\[
U_1(\Gamma_G D_{T,G} h, F_1 R^*S^*h)=(\Gamma_GD_{T,G} R^*h, F_1h), \quad (h\in\clh) 
\]
we have
\[
\Gamma_G D_{T,G} R^* = A_1 \Gamma_G D_{T,G} + B_1 F_1 R^*S^*,\ \text{and }
F_1 = C_1 \Gamma_G D_{T,G}.
\]
Combining these together, we get
\[
\Gamma_G D_{T,G} R^* = A_1 \Gamma_G D_{T,G} + B_1 C_1 \Gamma_G D_{T,G}
R^*S^*.
\]
This implies that 
\[
\begin{split}
\langle M_{\Phi_1}^*(I\otimes \Gamma_G)\Pi_G h, {\z}^{\bm{l}} \otimes \eta \rangle
& = \langle (I\otimes \Gamma_G)\Pi_G h, M_{\Phi_1}({\z}^{\bm{l}} \otimes \eta)
\rangle
\\
& = \langle (I \otimes \Gamma_G) \sum_{\bm{k} \in \Z^{|G|}}
{\z}^{\bm{k}} \otimes D_{T,G} T(G)^{* \bm{k}} h, (A_1^* +
z_{1} C_1^* B_1^*) ({\z}^{\bm{l}} \otimes \eta) \rangle
\\
& = \langle (A_1 \Gamma_G D_{T,G}+ B_1 C_1 \Gamma_G D_{T,G}R^*S^*) T(G)^{* \bm{l}} h, \eta \rangle 
\\
&= \langle \Gamma_G D_{T,G} T(G)^{* \bm{l}} R^* h, \eta\rangle.
\end{split}
\]
Thus $(I\otimes \Gamma_G)\Pi_G R^* = M_{\Phi_1}^* (I\otimes \Gamma_G)\Pi_G$. This completes the proof.
		
\end{proof}

Our construction of isometric dilations for tuples in $\mathfrak{B}^n_{1,n}(\clh)$ relies on getting co-extensions of the tuples first. On the other hand, construction of these co-extensions is build upon obtaining certain operator tuples corresponding to each $G\subset \{1,\dots,n-1\}$. 
In the next two lemmas we construct these operator tuples. The first lemma deals with the case when $1\in G$.

\begin{lemma}\label{1inG}
Let $T=(T_1,\ldots,T_n)\in\mathfrak{B}^n_{1,n}(\clh)$.
Let $G=\{m_1,\ldots,m_r\}\subset\{1,\ldots,n-1\}$ with $1=m_1\in G$ and set $\bar{G}=\{1,\ldots,n-1\}\smallsetminus G$. Suppose that $T_j$ is a co-isometry for all $j\in\bar{G}$ and $\Pi_G$ be as in (\ref{dilG}). Then there exist a Hilbert space $\clh_{G}$, an isometry $\Gamma_G:\cld_{\hat{T}_{1n},G}\to \clh_{G} $ and $n$-tuple of contractions
$V=(V_1,\ldots,V_n)$ on $H^2_{\clh_{G}}(\D^r)$ such that $\hat{V}_1$ and
$\hat{V}_n$ are doubly commuting and 
\[
 (I\otimes \Gamma_G)\Pi_GT_i^*=V_i^*(I\otimes \Gamma_G)\Pi_G,\
 (1\le i\le n)
\]
where $\Pi_G: \clh\to H^2_{\cld_{\hat{T}_{1n},G}}(\D^r)$ is the dilation map of $\hat{T}_{1n}(G)$ and
\[
\begin{split}
 V_1=P_{H^2_{\clh_{G}}(\D^r)}M_{\Phi}|_{H^2_{\clh_{G}}(\D^r)}, V_n=P_{H^2_{\clh_{G}}(\D^r)}M_{\Psi}|_{H^2_{\clh_{G}}(\D^r)}, V_{m_i}=M_{z_i} (1< i\le r)
 \end{split}
 \]
 and 
 \[
 V_i= (I\otimes W_i),\ (i\in \bar{G})
\]
for inner functions $\Phi$ and $\Psi$, depend only on $z_1$ variable, in $H^{\infty}_{\clb(\clk_G)}(\D^r)$ with $\Phi(\z)\Psi(\z)=z_1$ and a Hilbert space $\clk_G$ containing $\clh_{G}$ and for unitaries $W_i$ on $\clh_{G}$.

Moreover, one also has
\[
(I\otimes \Gamma_G)\Pi_GT_1^*=M_{\Phi}^*(I\otimes \Gamma_G)\Pi_G,\
 \text{and } 
 (I\otimes \Gamma_G)\Pi_GT_n^*=M_{\Psi}^*(I\otimes \Gamma_G)\Pi_G.
\]
	
\end{lemma}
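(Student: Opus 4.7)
The plan is to apply Lemma~\ref{factorization} to the tuple $\hat{T}_{1n}(G)=(T_1T_n,T_{m_2},\ldots,T_{m_r})$, which lies in $\mathfrak{B}^{r}(\mathcal{H})$ by Lemma~\ref{combine}, writing its first coordinate as the product $RS$ with $R=T_1$ and $S=T_n$. The canonical dilation map $\Pi_G$ already satisfies $\Pi_G T_1^*T_n^*=M_{z_1}^*\Pi_G$ and $\Pi_G T_{m_i}^*=M_{z_i}^*\Pi_G$ for $1<i\le r$; Lemma~\ref{factorization} will split the first of these into separate intertwinings (after the embedding $I\otimes\Gamma_G$) with inner multipliers $M_\Phi^*$ for $T_1^*$ and $M_\Psi^*$ for $T_n^*$, where $\Phi,\Psi$ depend only on $z_1$ and $\Phi\Psi=z_1$. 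In parallel, the co-isometry hypothesis on $T_j$ for $j\in\bar{G}$ will yield commuting unitaries $W_j$, giving $V_j=I\otimes W_j$.

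To feed Lemma~\ref{factorization}, I would take $F_1=D_{\hat{T}_n,G}$ and $F_2=D_{\hat{T}_1,G}$. The two forms of the defect identity~\eqref{defect identity 2} reduce the norm identity required to define the partial isometry
\[
(D_{\hat{T}_{1n},G}h,\,D_{\hat{T}_n,G}T_1^*T_n^*h)\longmapsto (D_{\hat{T}_{1n},G}T_1^*h,\,D_{\hat{T}_n,G}h)
\]
(and its symmetric counterpart for $U_2$, with $F_1$ replaced by $F_2$) to a trivial identity. Moreover, positivity in $D^2_{\hat{T}_{1n},G}=D^2_{\hat{T}_n,G}+T_1D^2_{\hat{T}_1,G}T_1^*$ forces $D_{\hat{T}_{1n},G}h=0\Rightarrow D_{\hat{T}_n,G}h=0$, which supplies the required zero in the lower-right block of the BCL form of $U_1$. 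After extending these partial isometries to unitaries $U_1,U_2$ on $\mathcal{K}_G\oplus\mathcal{F}_i$ (enlarging dimensions as needed), Lemma~\ref{factorization} produces inner multipliers $\Phi,\Psi\in H^\infty_{\mathcal{B}(\mathcal{K}_G)}(\mathbb{D}^r)$ depending only on $z_1$ and with $\Phi\Psi=z_1$, together with the desired intertwining relations for $T_1^*$ and $T_n^*$.

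For each $j\in\bar{G}$, the co-isometry condition $T_jT_j^*=I$ combined with commutativity of the $T_i$'s yields $T_jD^2_{\hat{T}_{1n},G}T_j^*=D^2_{\hat{T}_{1n},G}$, so $w_j^*D_{\hat{T}_{1n},G}h:=D_{\hat{T}_{1n},G}T_j^*h$ defines an isometry on $\mathcal{D}_{\hat{T}_{1n},G}$. These isometries pairwise commute (because the $T_j^*$ do), and by Ito's theorem on simultaneous unitary extensions of commuting isometries they extend to commuting unitaries $W_j^*$ on a Hilbert space $\mathcal{H}_G\supset\mathcal{D}_{\hat{T}_{1n},G}$. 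Taking $\Gamma_G$ to be the inclusion $\mathcal{D}_{\hat{T}_{1n},G}\hookrightarrow\mathcal{H}_G$, the intertwining $(I\otimes\Gamma_G)\Pi_G T_j^*=(I\otimes W_j^*)(I\otimes\Gamma_G)\Pi_G$ is immediate fiberwise. Setting $V_{m_i}=M_{z_i}$ for $1<i\le r$, $V_j=I\otimes W_j$ for $j\in\bar{G}$, and $V_1,V_n$ as the compressions of $M_\Phi,M_\Psi$ to $H^2_{\mathcal{H}_G}(\mathbb{D}^r)$, the doubly commuting property of $\hat{V}_1$ and $\hat{V}_n$ follows because $\Phi,\Psi$ depend only on $z_1$ and so commute fiberwise with the $M_{z_i}$ and with $I\otimes W_j$, while commuting unitaries always doubly commute. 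The main obstacle is the joint construction of $\mathcal{K}_G\supset\mathcal{H}_G$ so that the BCL factorization and the Ito extensions are compatible---specifically, the $W_j$'s must extend to $\mathcal{K}_G$ in a way that commutes with $M_\Phi,M_\Psi$ fiberwise, which is the delicate bookkeeping step that underlies the whole construction.
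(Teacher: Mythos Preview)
Your outline tracks the paper's strategy closely---Lemma~\ref{factorization} with $R=T_1$, $S=T_n$, $F_1=D_{\hat T_n,G}$, $F_2=D_{\hat T_1,G}$, and Douglas' lemma for the $j\in\bar G$ part---but the step you flag as ``the main obstacle'' is not bookkeeping: it is the heart of the proof, and your sketch does not contain the idea that resolves it.

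The specific gap is your claim that $V_1=P_{H^2_{\clh_G}}M_\Phi|_{H^2_{\clh_G}}$ doubly commutes with $V_j=I\otimes W_j$ because ``$\Phi$ depends only on $z_1$ and so commutes fiberwise with $I\otimes W_j$''. Dependence only on $z_1$ gives you commutation with the $M_{z_i}$, $i\ge 2$, but says nothing about commutation with $I\otimes W_j$: for that you need the operator coefficient $\Phi(\z)\in\clb(\clk_G)$ to intertwine with $W_j$. In your construction the unitaries $W_j$ come from Ito's theorem applied to isometries on $\cld_{\hat T_{1n},G}$, while $U_1,U_2$ (hence $\Phi,\Psi$) are produced independently by extending partial isometries; there is no mechanism tying the two together, so the required commutation simply need not hold.

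The paper avoids this by \emph{not} working on $\cld_{\hat T_{1n},G}$ directly. It defines co-isometries $W_{j,1}$ on $\cld_{\hat T_1,G}$ and $W_{j,n}$ on $\cld_{\hat T_n,G}$ separately, takes their minimal unitary co-extensions, and lets $\Gamma_G$ be the explicit isometry $D_{\hat T_{1n},G}h\mapsto (D_{\hat T_n,G}h,\,D_{\hat T_1,G}T_1^*h)$ into the direct sum $\clk_G'=\clk_{n,G}\oplus\clk_{1,G}$ (so $\Gamma_G$ is not an inclusion). The BCL unitary is then first defined only on the range $\clq_G$ of $\Gamma_G$, where one checks by hand that it intertwines the restrictions $W_j^{'*}|_{\clq_G}$ and $W_j^{'*}|_{\tilde\clq_G}$; an intertwining lifting theorem then extends it to a unitary $U''$ on the minimal $\{W_j'\}$-reducing subspace $\clh_G$, \emph{preserving} the intertwining $U''W_j^{'*}|_{\clh_G}=W_j^{'*}U''$. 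This relation, together with the explicit form $\Phi(\z)=(P+z_1P^\perp)U$, is what forces $P_{\clh_G}(PU)W_j^*=W_j^*P_{\clh_G}(PU)$ and hence $V_1V_j^*=V_j^*V_1$ (with Fuglede--Putnam to pass to $V_j$). Your route via Ito on $\cld_{\hat T_{1n},G}$ gives you no handle on this intertwining, so the doubly-commuting conclusion is unproved.
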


\begin{proof}
We first define several unitaries which will make a way to define 
the $n$-tuple of contractions $(V_1,\dots,V_n)$.
Since $T_j$ is a co-isometry, then
\[
 T_jD^2_{\hat{T}_1,G}T_j^*=\sum_{F\subset G}(-1)^{|F|}(\hat{T}_1(G))_FT_jT_j^*(\hat{T}_1(G))_F^*=D^2_{\hat{T}_1,G}
\]
and 
\[
T_jD^2_{\hat{T}_n,G}T_j^*=\sum_{F\subset G}(-1)^{|F|}(\hat{T}_n(G))_FT_jT_j^*(\hat{T}_n(G))_F^*=D^2_{\hat{T}_n,G},
\]
for all $j\in\bar{G}$. Then by Douglas' lemma, for all $j\in \bar{G}$, there exist
co-isometries $W_{j,1} : \cld_{\hat{T}_1,G}\to\cld_{\hat{T}_1,G}$
and $W_{j,n} : \cld_{\hat{T}_n,G}\to\cld_{\hat{T}_n,G}$
such that
\[
W_{j,1}^{*}D_{\hat{T}_1,G}h=D_{\hat{T}_1,G}T_j^*h\ \text{and }
W_{j,n}^{*}D_{\hat{T}_n,G}h=D_{\hat{T}_n,G}T_j^*h\quad (h\in\clh).
\]
Let the commuting tuple of unitaries $\{W_{j,n}': j\in \bar{G}\}$ on $\clk_{n,G}\supseteq \cld_{\hat{T}_n,G}$
be the minimal unitary co-extension of the tuple of commuting co-isometries $\{W_{j,n}:j\in\bar{G}\}$ and let the commuting tuple of unitaries $\{W_{j,1}^{'}:j\in \bar{G}\}$
on $ \clk_{1,G}\supseteq\cld_{\hat{T}_1,G}$ be the minimal unitary co-extension of the commuting tuple of co-isometries $\{W_{j,1}:j\in\bar{G}\}$.
Set 
\[W_j':=W_{j,n}'\oplus W_{j,1}^{'}\ (j\in\bar{G})\
 \text{and } \clk_G^{'}:=\clk_{n,G}\oplus \clk_{1,G}.
\]
 Then the commuting tuple of unitaries $\{W_j^{'}:j\in\bar{G}\}$ on $\clk_G^{'}$ is the minimal unitary co-extension of $\{W_{j,n}\oplus W_{j,1}:j\in\bar{G}\}$.
Now the identity, as noted in ~\eqref{defect identity 2}, 
\[
D^2_{\hat{T}_n,G}+T_1D^2_{\hat{T}_1,G}T_1^*=
D^2_{\hat{T}_{1n},G}=D^2_{\hat{T}_1,G}+T_nD^2_{\hat{T}_n,G}T_n^*
\]
implies that there exist an isometry $\Gamma_G: \cld_{\hat{T}_{1n},G}\to\clk_G^{'}$ 
and a unitary  
\[
U':\clq_G:=\{(D_{\hat{T}_n,G}h,D_{\hat{T}_1,G}T_1^*h):h\in\clh\}\to
\tilde{\clq}_G:=\{(D_{\hat{T}_n,G}T_n^*h,D_{\hat{T}_1,G}h):h\in\clh\}
\] 
such that 
\begin{equation}\label{VG}
\Gamma_G(D_{\hat{T}_{1n},G}h)=
(D_{\hat{T}_n,G}h, D_{\hat{T}_1,G}T_1^*h),\quad (h\in\clh)
\end{equation}
and
\[
U'(D_{\hat{T}_n,G}h,D_{\hat{T}_1,G}T_1^*h)=
(D_{\hat{T}_n,G}T_n^*h,D_{\hat{T}_1,G}h)\quad (h\in\clh).
\]
By the construction, note that $\clq_G$ and $\tilde{\clq}_G$
are joint $\{W_j^{' *}:j\in \bar{G}\}$-invariant subspaces and 
a straightforward calculation shows that $U^{'}$ intertwines 
the tuple of isometries $\{W_j^{' *}|_{\clq_G}: j\in \bar{G}\}$
and $\{W_j^{' *}|_{\tilde{\clq}_G}: j\in \bar{G}\}$, that is 
\begin{equation}\label{U'}
U'W_j^{' *}|_{\clq_G}=(W_j^{' *}|_{\tilde{\clq}_G})U'\ (j\in\bar{G}).
\end{equation}
Let $\clh_G\subseteq \clk_G^{'}$ and $\tilde{\clh}_G\subseteq \clk_G^{'}$
be the smallest joint $\{W_j^{'}:j\in\bar{G}\}$ reducing subspaces containing $\clq_G$ and $\tilde{\clq}_G$, respectively. 
More precisely,
\[
\clh_G=\bigvee_{\bm k\in\Z^{|\bar{G}|},  q\in\clq_G}\prod_{j\in\bar{G}}W_j^{k_j}q
\ \text{ and }
\tilde{\clh}_G=\bigvee_{\bm k\in\Z^{|\bar{G}|},  \tilde{q}\in\tilde{\clq}_G}\prod_{j\in\bar{G}}W_j^{k_j}\tilde{q}.
\]
Then $\{W_j^{' *}|_{\clh_G}: j\in \bar{G}\}$ and $\{W_j^{' *}|_{\tilde{\clh}_{G}}: j\in\bar{G}\}$ are the minimal unitary extension 
of $\{W_j^{' *}|_{\clq_G}:j\in \bar{G}\}$ and
 $\{W_j^{' *}|_{\tilde{\clq}_G}:j\in \bar{G}\}$, respectively.
By a well-known intertwining lifting theorem, we extend $U^{'}$ 
to a unitary 
\[U^{''}: \clh_G\to \tilde{\clh}_G\ \text{ satisfying } U^{''}|_{\clq_G}=U^{'}\]  and
\begin{equation}\label{intertwiner}
U^{''}W_j^{' *}|_{\clh_G}=(W_j^{' *}|_{\tilde{\clh}_G})U^{''}= W_j^{' *}U^{''}\ (j\in\bar{G}).
\end{equation}
By adding an infinite dimensional Hilbert space $\clk$ if necessary,
we extend $U^{''}$ further to get a unitary 
\begin{equation}\label{unitary}
U:\clk_G \to \clk_G\  \text{ such that } U|_{\clh_G}=U'',
\end{equation}
where $\clk_G:=\clk^{'}_G\oplus \clk $. 
We set 
\[W_j:= W_j^{'}|_{\clh_G}\]
for all $j\in \bar{G}$.
The stage is 
set and we now proceed to find the contractions with
 appropriate properties.
 
First, we define  
 \[V_j:=I_{H^2(\D^r)}\otimes W_j\quad (j\in\bar{G}).\] 
Then $V_j$ is a unitary on $H^2_{\clh_G}(\D^r)$ and observe that 
for all $h\in\clh$,
\begin{align*}
(I\otimes W_j^{*})(I\otimes \Gamma_G)\Pi_Gh
&=(I\otimes W_j^{*})
(I\otimes \Gamma_G)\sum_{\bm{k}\in\Z^r}\z^{\bm{k}}\otimes D_{\hat{T}_{1n},G}\hat{T}_{1n}(G)^{*\bm{k}}h\\
&=(I\otimes W_j^{*})\sum_{\bm{k}\in\Z^r}\z^{\bm{k}}\otimes
(D_{\hat{T}_n,G}\hat{T}_{1n}(G)^{*\bm{k}}h,\,D_{\hat{T}_1,G}\hat{T}_{1n}(G)^{*\bm{k}}T_1^*h)\\
&=\sum_{\bm{k}\in\Z^r}\z^{\bm{k}}\otimes(D_{\hat{T}_n,G}\hat{T}_{1n}(G)^{*\bm{k}}T_j^*h,\,D_{\hat{T}_1,G}\hat{T}_{1n}(G)^{*\bm{k}}T_1^*T_j^*h)\\
&=(I\otimes \Gamma_G)\Pi_GT_j^*h.
\end{align*}
Therefore,  
\begin{equation}\label{jinbarG}
(I\otimes \Gamma_G)\Pi_GT_j^*=V_j^*(I\otimes \Gamma_G)\Pi_G.
\end{equation}
for all $j\in \bar{G}$.
We now proceed towards finding contractions corresponding to $j\in G$. 
Since  $\Pi_G$ is the canonical dilation map of $\hat{T}_{1n}(G)$,
\begin{align*}
\Pi_GT_1^*T_n^*=M_{z_1}^*\Pi_G\
\text{and}\ \Pi_GT_{m_i}^*=M_{z_i}^*\Pi_G\ (1<i\le r),
\end{align*}
where $(M_{z_1},\dots, M_{z_r})$ is the $r$-tuple of shifts on $H^2_{\cld_{\hat{T}_{1n}, G}}(\D^r)$.
Then setting 
\[V_{m_i}:=M_{z_i}\ \text{ on } 
H^2_{\clh_G}(\D^r)\quad (1<i\le r)
\]
 and using the commuting property of $M_{z_i}^*$ and $(I\otimes \Gamma_G)$ we have,
\[
(I\otimes \Gamma_G)\Pi_G T_1^*T_n^*=M_{z_1}^*(I\otimes \Gamma_G)\Pi_G\
\text{and }
(I\otimes \Gamma_G)\Pi_GT_{m_i}^*=V_{m_i}^*(I\otimes \Gamma_G)\Pi_G\quad
(1<i\le r).
\]
Next we factor $M_{z_1}$, using Lemma~\ref{factorization}, to obtain contractions corresponding to $j=1, n$ as follows.
Recall that $\clk_G=\clk_{n,G}\oplus\clk_{1,G}\oplus\clk$. 
Let $P:\clk_G\to \clk_{1,G}\oplus \clk$ be the projection map and $\iota_1 : \clk_{n,G} 
\hookrightarrow \clk_G$ and $\iota_2: \clk_{1,G}\oplus\clk
\hookrightarrow \clk_G$ be the inclusion maps defined by
\[
\iota_1(h)=(h, 0,0),\ \text{and }
\iota_2(k,k')=(0,k,k'),\quad (h\in \clk_{n,G}, k\in\clk_{1,G}, k'\in\clk).
\]
 Then it is easy to see that
\[
\left[
\begin{array}{cc}
P & \iota_{1} \\
\iota_{1}^* & 0 \\
\end{array}
\right] : \clk_G \oplus \clk_{n,G} \raro \clk_G \oplus \clk_{n,G},
\]
is a unitary, and therefore 
\[
U_1 = \left[
\begin{array}{cc}
U^* & 0 \\
0 & I \end{array}
\right]
\left[
\begin{array}{cc}
P & \iota_{1} \\
\iota_{1}^* & 0
\end{array}
\right]= \left[\begin{array}{cc}
U^*P& U^*\iota_1\\
\iota_1^* & 0
\end{array}\right]: \clk_G \oplus \clk_{n,G} \raro \clk_G \oplus \clk_{n,G}
\]
is also a unitary, where $U$ is the unitary as in ~\eqref{unitary}.
We now claim that $U_1$ satisfies the hypothesis of
Lemma~\ref{factorization} with $T= \hat{T}_{1n}(G)$ and  $F_1=D_{\hat{T}_n,G}$. Indeed, for $h \in \clh$,
\begin{align*}
U_{1}(\Gamma_GD_{\hat{T}_{1n},G} h, D_{\hat{T}_n,G} T_{1}^*T_{n}^*h) & =
U_{1}( D_{\hat{T}_n,G}h, D_{\hat{T}_1,G}T_{1}^*h,0_{\clk},  D_{\hat{T}_n,G}T_{1}^*T_{n}^*h)
\\
&=(U^*(D_{\hat{T}_n,G} T_{1}^*T_{n}^*h, D_{\hat{T}_1,G} T_{1}^*h,0_{\clk}), D_{\hat{T}_n,G} h)
\\
&=(D_{\hat{T}_n,G}T_1^* h,D_{\hat{T}_1,G} T_1^{* 2} h,0_{\clk}, D_{\hat{T}_n,G} h)
\\
&=(\Gamma_G D_{\hat{T}_{1n},G} T_1^* h, D_{\hat{T}_n,G} h).
\end{align*}
Similarly, one can check that the unitary
\[
U_{2} =\left[ \begin{array}{cc}
P^\perp & \iota_{2} \\
\iota_{2}^* & 0
\end{array}  \right] \left[ \begin{array}{cc}
U & 0 \\0 & I
\end{array} \right]=\left[ \begin{array}{cc}
P^\perp U & \iota_{2} \\
\iota_{2}^*U & 0 \\
\end{array}  \right]:  \clk_G \oplus (\clk_{1,G}\oplus\clk) \raro \clk_G \oplus (\clk_{1,G}\oplus\clk),
\]
 satisfies  
\[
U_2 (\Gamma_G D_{\hat{T}_{1n},G} h, D_{\hat{T}_1,G} T_1^* T_n^* h,0_{\clk})
=(\Gamma_G D_{\hat{T}_{1n},G} T_n^*
h, D_{\hat{T}_1,G}h,0_{\clk}),
\]
for all $h \in \Hil$. Therefore by Lemma~\ref{factorization}, we
have 
\[
(I\otimes \Gamma_G)\Pi_GT_1^*=M_{\Phi}^*(I\otimes \Gamma_G)\Pi_G\
\text{and }
(I\otimes \Gamma_G)\Pi_GT_n^*=M_{\Psi}^*(I\otimes \Gamma_G)\Pi_G,
\]
where
\[
\Phi(\z)= (P+ z_1 P^{\perp})U\
\text{and }
\Psi(\z)= U^*(P^{\perp}+ z_1P),\quad (\z\in\D^r)
\]
are inner multipliers, depend only on $z_1$ variable, with
\[
\Phi (\z) \Psi(\z) = \Psi(\z) \Phi(\z) = z_1 I_{\clk_G},
\]
for all $\z\in\D^{r}$. Since $\overline{\mbox{ran}} (I\otimes \Gamma_G)\Pi_G\subseteq H^2_{\clh_G}(\D^r)$, we also have 
\[
(I\otimes \Gamma_G)\Pi_GT_1^*=V_1^*(I\otimes \Gamma_G)\Pi_G\
\text{and }
(I\otimes \Gamma_G)\Pi_GT_n^*=V_n^*(I\otimes \Gamma_G)\Pi_G,
\]
where 
\[V_1=P_{H^2_{\clh_G}(\D^r)}M_{\Phi}|_{H^2_{\clh_G}(\D^r)}\
\text{ and } V_n=P_{H^2_{\clh_G}(\D^r)}M_{\Psi}|_{H^2_{\clh_G}(\D^r)}.
\]
We pause for a moment and make a remark that even though 
$M_{\Phi}$ and $M_{\Psi}$ commute each other, $V_1$ and $V_n$
does not necessarily commute. The reader must have observed that 
we have obtained the $n$-tuple of contractions $(V_1,\dots,V_n)$
on $H^2_{\clh_G}(\D^r)$ with the required intertwining property. 
The proof will be complete if we show that $\hat{V}_1$ and $\hat{V}_n$
are doubly commuting. We only show that $\hat{V}_n$ is doubly commuting as the proof for $\hat{V}_1$ is similar. 

Since $M_{\Phi}$ and $M_{z_i}$ on $H^2_{\clk_G}(\D^r)$ doubly commute each other for all $i=2,\dots,r$, it follows that $V_1$ and 
$V_{m_i}$ doubly commute for all $i=2,\dots,r$. It is obvious that $V_{m_i}$ doubly commutes with $V_j$ for all $i=2,\dots,r$ and $j\in\bar{G}$. Thus it remains to show that $V_1$ commutes with $V_j$ for all $j\in \bar{G}$. 
To this end, we first claim that 
\[
P_{\clh_G}(PU)W_j^*=W_j^*P_{\clh_G}(PU|_{\clh_G})\ \text{ and }
P_{\clh_G}(P^{\perp}U)W_j^*=W_j^*P_{\clh_G}(P^{\perp}U|_{\clh_G})
\] 
for all $j\in\bar{G}$. Indeed, since $\clh_G$ is a joint $\{W_j^{' *}:j\in\bar{G}\}$ reducing subspace then $P_{\clh_G}$ commutes with $W_j^{' *}$ for all $j\in \bar{G}$. Also by the construction 
\[
PW_j^{' *}|_{\clk_G^{'}}=W_j^{' *}P|_{\clk_G^{'}}.
\]
Then, using the intertwining property ~\eqref{intertwiner}, we have  for all $h\in\clh_G$ and $j\in\bar{G}$,
\begin{align*}
P_{\clh_G}(PU)W_j^*h=P_{\clh_G}(PU^{''})W_j^{' *}h
=P_{\clh_G}PW_j^{' *}U^{''}h=W_j^{' *}P_{\clh_G} PUh= W_j^{*}P_{\clh_G} PUh.
\end{align*}
This proves the first identity in the claim and the proof of the 
second identity is similar and we left it for the reader. 
These identities, in turn, implies that
\begin{align*}
P_{H^2_{\clh_G}(\D^r)}M_{\Phi}|_{H^2_{\clh_G}(\D^r)}(I\otimes W_j^*)|_{H^2_{\clh_G}(\D^r)}=&P_{H^2_{\clh_G}(\D^r)}M_{(P+z_1P^{\perp})U}(I\otimes W_j^*)|_{H^2_{\clh_G}(\D^r)}\\
=&P_{H^2_{\clh_G}(\D^r)}M_{(PUW_j^*+z_1P^{\perp}UW_j^*)}|_{H^2_{\clh_G}(\D^r)}\\
=&P_{H^2_{\clh_G}(\D^r)}(I\otimes W_j^*)P_{H^2_{\clh_G}(\D^r)}M_{(PU+z_1P^{\perp}U)}|_{H^2_{\clh_G}(\D^r)}\\
=&(I\otimes W_j^*)P_{H^2_{\clh_G}(\D^r)}M_{\Phi}|_{H^2_{\clh_G}(\D^r)}.
\end{align*} 
Thus $V_1$ commutes with $V_j^*$ and consequently, by Fuglede-Putnam, $V_1$ commutes with $V_j$ for all $j\in\bar{G}$. 
This completes the proof.

\end{proof}
\begin{remark}\label{remark1}
It should be noted that the canonical dilation map $\Pi_G$ is not an isometry and that is why the $n$-tuple  of contractions $V$ is not a co-extension of $T$. A word of caution is in order regarding the $n$-tuple 
contractions $V=(V_1,\dots,V_n)$. The tuple $V$ is not a commuting 
tuple, in general, and the only trouble is that $V_1$ and $V_n$ do not commute each other. However,  $(V_0=M_{z_1}, V_1,\dots, V_{n-1})$ on $H^2_{\clh_G}(\D^r)$ is 
an $n$-tuple of commuting contractions with 
\[
  (I\otimes \Gamma_G)\Pi_GT_1^*T_n^*=M_{z_1}^*(I\otimes \Gamma_G)\Pi_G,\
 \text{and }(I\otimes \Gamma_G)\Pi_GT_i^*=V_i^*(I\otimes \Gamma_G)\Pi_G\
 (1\le i\le n-1),
\]
and also
\begin{equation}\label{the identity}
V_1^*V_0=V_n\ \text{and } V_n^*V_0=V_1.
\end{equation}
Similar statement also can be made for the tuple 
$(V_0=M_{z_1}, V_2, \dots, V_n)$. 
\end{remark}

The situation for the case when $1\notin G$ is much simpler 
and we consider it in the next lemma. 
\begin{lemma}\label{1notinG}
Let $T=(T_1,\ldots,T_n)\in\mathfrak{B}^n_{1,n}(\clh)$.
Let $G=\{m_1,\ldots,m_r\}\subset\{1,\ldots,n-1\}$ with $1\notin G$ and set $\bar{G}=\{1,\ldots,n-1\}\smallsetminus G$.
Suppose $T_j$ is a co-isometry for all $j\in\bar{G}\cup\{n\}$. Then there exist a Hilbert space $\clh_G\supset \cld_{\hat{T}_{1n},G}$  and an $n$-tuple of commuting isometries 
$V=(V_1,\ldots,V_n)$ on $H^2_{\clh_G}(\D^r)$ such that $\hat{V}_1$ and $\hat{V}_n$ are doubly commuting and 
\[
 \Pi_GT_{i}^*=V_i^*\Pi_G\ (1\le i\le n)
\]
where $\Pi_G: \clh\to H^2_{\cld_{\hat{T}_{1n},G}}(\D^r)
\subset H^2_{\clh_G}(\D^r)$ is the canonical dilation map of $\hat{T}_{1n}(G)$ and
\[
 V_{m_i}=M_{z_i} (1\leq i\leq r)\ \text{and } V_j=I\otimes U_j\ (j\in \bar{G}\cup\{n\})
\]
for some commuting unitaries $U_j$'s on $\clh_G$.

\end{lemma}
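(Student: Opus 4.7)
The plan is to exploit the fact that the ``missing'' indices $\bar G\cup\{n\}$ consist entirely of co-isometries, so the associated Douglas maps on the defect space are co-isometries and extend to a commuting family of unitaries; these give the components $I\ot U_j$, while the indices in $G$ are handled directly by the canonical dilation map.

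First, since $T\in\mathfrak{B}^n_{1,n}(\clh)$, Lemma~\ref{combine} gives $\hat T_{1n}\in\mathfrak{B}^{n-1}(\clh)$, hence $\hat T_{1n}(G)\in\mathfrak{B}^{|G|}(\clh)$. Because $1\notin G$, we have $\hat T_{1n}(G)=\hat T_n(G)=T(G)$, and the canonical dilation map $\Pi_G:\clh\to H^2_{\cld_{\hat T_{1n},G}}(\D^r)$ from \eqref{dilG} satisfies $\Pi_G T_{m_i}^*=M_{z_i}^*\Pi_G$ for all $i=1,\ldots,r$. Thus setting $V_{m_i}=M_{z_i}$ immediately handles the components indexed by $G$.

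Next I would handle the remaining indices $j\in\bar G\cup\{n\}$. For any such $j$, the operator $T_j$ commutes with every $T_{m_i}$ and is a co-isometry, so
\[
T_j D^2_{\hat T_{1n},G}T_j^*=\sum_{F\subset G}(-1)^{|F|}T_F(T_jT_j^*)T_F^*=D^2_{\hat T_{1n},G}.
\]
By Douglas' lemma there is a co-isometry $W_j:\cld_{\hat T_{1n},G}\to\cld_{\hat T_{1n},G}$ with $W_j^*D_{\hat T_{1n},G}=D_{\hat T_{1n},G}T_j^*$. Because the $T_j$'s mutually commute, the identities $W_j^*W_k^*D_{\hat T_{1n},G}h=D_{\hat T_{1n},G}T_k^*T_j^*h=W_k^*W_j^*D_{\hat T_{1n},G}h$ force $\{W_j:j\in\bar G\cup\{n\}\}$ to be a commuting family of co-isometries on $\cld_{\hat T_{1n},G}$. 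I then invoke the standard minimal commuting unitary co-extension (simultaneous Ito-type dilation of commuting co-isometries) to get a Hilbert space $\clh_G\supset\cld_{\hat T_{1n},G}$ and commuting unitaries $\{U_j:j\in\bar G\cup\{n\}\}$ on $\clh_G$ with $U_j^*|_{\cld_{\hat T_{1n},G}}=W_j^*$.

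Define $V_j=I\ot U_j$ on $H^2_{\clh_G}(\D^r)$ for $j\in\bar G\cup\{n\}$ and $V_{m_i}=M_{z_i}$ for $i=1,\ldots,r$; viewing $\Pi_G$ as a map into $H^2_{\clh_G}(\D^r)$ via the inclusion $\cld_{\hat T_{1n},G}\hookrightarrow\clh_G$. A direct computation on the expansion of $\Pi_Gh$ gives
\[
(I\ot U_j^*)\Pi_G h=\sum_{\bm k\in\Z^r}\z^{\bm k}\ot W_j^*D_{\hat T_{1n},G}T(G)^{*\bm k}h=\Pi_G T_j^* h,
\]
using $W_j^*D_{\hat T_{1n},G}=D_{\hat T_{1n},G}T_j^*$ together with $T_jT_{m_i}=T_{m_i}T_j$. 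Commutativity of the full tuple $V$ is clear since $M_{z_i}$'s act on the base, $I\ot U_j$'s act on the fibre, and each of the two families mutually commutes; each $V_i$ is an isometry (a shift or a unitary); and in fact $V$ is doubly commuting, which is much stronger than the required double commutativity of $\hat V_1$ and $\hat V_n$.

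The only conceptual step that requires care is the existence of a commuting unitary co-extension of the commuting family $\{W_j\}$ of co-isometries; since co-isometries taken adjoint become isometries, this reduces to the well-known fact that a commuting family of isometries admits a commuting unitary extension, so I expect no serious obstacle. All remaining verifications are routine manipulations on the Fourier expansion of $\Pi_G h$.
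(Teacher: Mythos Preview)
Your proposal is correct and follows essentially the same route as the paper: use the co-isometry hypothesis on $T_j$ for $j\in\bar G\cup\{n\}$ to get $T_jD^2_{\hat T_{1n},G}T_j^*=D^2_{\hat T_{1n},G}$, apply Douglas' lemma to obtain commuting co-isometries on the defect space, take their minimal commuting unitary co-extension to define the $U_j$'s, and set $V_{m_i}=M_{z_i}$, $V_j=I\otimes U_j$. Your observation that $1\notin G$ forces $\hat T_{1n}(G)=T(G)$ (so $D_{\hat T_{1n},G}=D_{T,G}$) makes explicit a notational shift the paper leaves implicit, and your remark that the full tuple $V$ is in fact doubly commuting is exactly how the paper concludes.
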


\begin{proof}
	Since $T_j$ $(j\in\bar{G})$ and $T_n$ are co-isometries, a straight
forward computation as done in the proof of Lemma~\ref{1inG} yields 
	\[
	T_jD_{T,G}^2T_j^*=D_{T,G}^2\ \text{and } T_nD_{T,G}^2T_n^*=D_{T,G}^2.
	\]
	Then, by Douglas' lemma, there exist co-isometries $S_j:\cld_{T,G}\to\cld_{T,G}$ $(j\in\bar{G})$ and $S_n:\cld_{T,G}\to\cld_{T,G}$
	such that 
	\[
	S_j^*D_{T,G}h=D_{T,G}T_j^*h\ \text{and } S_n^*D_{T,G}h=D_{T,G}T_n^*h,
	\]
for all $h\in\clh$. Clearly $\{S_j: j\in\bar{G}\cup\{n\}\}$ is a tuple of commuting co-isometries on $\cld_{T,G}$. Let the commuting tuple of unitaries $\{U_j: j\in\bar{G}\cup \{n\}\}$ on $\clh_G\supseteq\cld_{T,G}$ be the minimal unitary co-extension of $\{S_j: j\in\bar{G}\cup\{n\}\}$.
On the other hand, since $\Pi_G:\clh\to H^2_{\cld_{T,G}}(\D^r)\subset H^2_{\clh_G}(\D^r)$ is the canonical dilation map of $\hat{T}_{1n}(G)$
then 
\[
	\Pi_GT_{m_i}^*=M_{z_i}^*\Pi_G
\]
for all $i=1\ldots,r$. Also it is evident from the construction of $U_j$'s that 
\[
\Pi_GT_{j}^*=(I\otimes U_j)^*\Pi_G
\] 
for all $j\in \bar{G}\cup\{n\}$. Set 
\[V_{m_i}:=M_{z_i}\ (1\leq i\leq r)\ \text{ and } V_j:=I\otimes U_j\ (j\in\bar{G}\cup\{n\}).\]
 Then the $n$-tuple of commuting isometries $V=(V_1,\dots, V_n)$ has the
 required property. This completes the proof. 
\end{proof}
We need one more lemma which describes a canonical way to construct co-isometries out of commuting contractions. 
\begin{lemma}\label{tilde}
Let $T=(T_1,\ldots,T_n)\in\T^n(\clh)$. Let $G\subset\{1,\ldots,n\}$ and $\bar{G}=\{1,\dots,n\}\smallsetminus G$. Then there exist a positive operator $Q:\clh \to\clh $ and contractions $\tilde{T}_j:\overline{\ran}\,Q\to\overline{\ran}\,Q$ $(1\le j\le n) $ defined by 
	\[
	\tilde{T}_j^*Qh=Q{T}_j^*h,\quad(h\in\clh)
	\]
such that $\tilde{T}_j$ is a co-isometry for all $j\in\bar{G}$.
\end{lemma}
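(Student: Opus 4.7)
The plan is to produce $Q$ as the square root of a positive operator $P\le I_{\clh}$ satisfying $T_j P T_j^* \le P$ for all $1\le j\le n$ and $T_j P T_j^* = P$ for $j\in\bar{G}$, obtained by an iterated SOT-limit. Enumerate $\bar{G} = \{j_1,\ldots,j_m\}$, set $P^{(0)} := I_{\clh}$, and inductively define
\[
P^{(l)} := \text{SOT-}\lim_{k \to \infty} T_{j_l}^k\, P^{(l-1)}\, T_{j_l}^{*k} \qquad (1 \le l \le m).
\]
Finally take $P := P^{(m)}$ and $Q := P^{1/2}$, so that $\overline{\ran}\,Q = \overline{\ran}\,P$.

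I would verify by induction on $l$ that the SOT-limit above exists and that $P^{(l)}$ satisfies (a) $T_{j_i} P^{(l)} T_{j_i}^* = P^{(l)}$ for all $1\le i\le l$ and (b) $T_j P^{(l)} T_j^* \le P^{(l)}$ for every $1\le j\le n$. Property (b) applied to $P^{(l-1)}$ with $j=j_l$ forces the sequence $\{T_{j_l}^k P^{(l-1)} T_{j_l}^{*k}\}_k$ to be monotonically decreasing and bounded below by $0$, which yields both the SOT-convergence and the bound $P^{(l)}\le P^{(l-1)}\le I_{\clh}$. Property (a) for the new index $j_l$ is built in by construction; for the older indices $i<l$ and for a general $j$, I would commute $T_{j_i}$ (respectively $T_j$) past $T_{j_l}^k$, and $T_{j_i}^*$ (respectively $T_j^*$) past $T_{j_l}^{*k}$, using only the commutativity of $T$, and then invoke the inductive hypothesis before passing to the SOT-limit. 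No double-commutativity is required, which is why the hypotheses of the lemma are so mild.

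With $P$ and $Q = P^{1/2}$ in hand, the estimate
\[
\|Q T_j^* h\|^2 = \langle T_j P T_j^* h, h\rangle \le \langle P h, h\rangle = \|Q h\|^2
\]
shows that the prescription $\tilde{T}_j^*(Qh) := Q T_j^* h$ is well-defined on $\ran\,Q$ and extends by continuity to a contraction on $\overline{\ran}\,Q$, whose adjoint is the desired $\tilde{T}_j$. For $j\in\bar{G}$, property (a) upgrades the inequality to equality, so $\tilde{T}_j^*$ is an isometry on $\overline{\ran}\,Q$; equivalently $\tilde{T}_j$ is a co-isometry. The main technical obstacle is the bookkeeping in the induction: property (b) must be preserved for every $j$ (including those outside $\bar{G}$) at every stage, because it is precisely what powers the monotone-SOT-convergence argument at the subsequent stage, and this requires the commutativity-based interchange to be executed cleanly before taking the SOT-limit.
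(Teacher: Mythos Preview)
Your argument is correct. The iterated SOT-limit construction works: property (b) at each stage guarantees monotone decrease, hence SOT-convergence, and the commutativity of the $T_j$'s lets you push operators inside the limit to propagate both (a) and (b) to the next stage. The passage from $T_jPT_j^*\le P$ (respectively $=P$) to the contraction (respectively co-isometry) $\tilde T_j$ is standard.

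The paper, however, takes a shorter route: it sets
\[
Q^2:=\text{SOT-}\lim_{m\to\infty}T_{\bar G}^{\,m}T_{\bar G}^{*\,m},
\]
a \emph{single} SOT-limit using the product $T_{\bar G}=\prod_{j\in\bar G}T_j$. The inequality $T_jQ^2T_j^*\le Q^2$ for all $j$ is immediate from contractivity, while for $j\in\bar G$ the reverse inequality follows from $T_jT_j^*\ge T_{\bar G}T_{\bar G}^*$ (since the remaining factors form a contraction) and a one-step index shift in the limit. This avoids the induction entirely. Your iterated construction and the paper's single-limit construction generally produce \emph{different} positive operators $Q$, but both satisfy the conclusion of the lemma; the paper's version is simply more economical, while yours has the minor advantage of making the invariance $T_{j_i}PT_{j_i}^*=P$ transparent at each stage rather than relying on the product trick.
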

\begin{proof}
Since $T_{\bar{G}}$ is a contraction, strong operator limit (SOT) of $T_{\bar{G}}^nT_{\bar{G}}^{*n}$ exists as $n\to\infty$.
Set
	\[	Q^2:=\text{SOT-}\lim_{n\to\infty}T_{\bar{G}}^nT_{\bar{G}}^{*n}.
	\]
Since $T_j$ is a contraction, it is easy to see that 
	\[
	T_jQ^2T_j^*\leq Q^2,
	\]
 for all $j=1\ldots,n$. Consequently by Douglas' lemma, for each $1\le j\le n$, there exists a contraction  $\tilde{T}_j:\overline{\mbox{ran}}\,Q\to\overline{\ran}\,Q$ such that
	\[
	\tilde{T}_j^*Qh=Q{T}_j^*h \quad(h\in\clh).
	\]
	Moreover, for all $j\in\bar{G}$, 
	\[
	Q^2\geq T_jQ^2T_j^*=\text{SOT-}\lim_{n\to\infty}T_{\bar{G}}^nT_jT_j^*T_{\bar{G}}^{*n}\geq \text{SOT-}\lim_{n\to\infty}T_{\bar{G}}^{n+1}T_{\bar{G}}^{*(n+1)}=Q^2,
	\]
	that is
	\[
	T_jQ^2T_j^*=Q^2.
	\]
Hence $\tilde{T}_j$ is a co-isometry for all $j\in\bar{G}$. This completes the proof.
\end{proof}

Combining the above lemmas we now find co-extensions for 
$n$-tuples in $\mathfrak{B}^n_{1,n}(\clh)$.
\begin{propn}\label{predil}
Let $T=(T_1,\dots,T_n)\in\mathfrak{B}^n_{1,n}(\clh)$. Then for each $G\subseteq\{1,\ldots,n-1\}$ there exist a Hilbert space $\clh_G$ and contractions $V_{G,1},\ldots,V_{G,n}$ on $H^2_{\clh_G}(\D^{|G|})$ 
 such that 
	\[
	\Pi T_j^*=\big(\bigoplus_{G}V_{G,j}^*\big)\Pi, \ (1\le j\le n)
	\]
where $\Pi:\clh\to\bigoplus_{G}H^2_{\clh_G}(\D^{|G|})$ is an isometry 
and by convention $H^2_{\clh_{\emptyset}}(\D^{|\emptyset|}):=\clh_{\emptyset}$.

Moreover, if we set $V_j:=\bigoplus_{G}V_{G,j}$ for all $j=1,\dots,n$
and $V=(V_1,\dots,V_n)$
then $\hat{V}_1$ and $\hat{V}_n$ are doubly commuting and $V_j$ is an isometry for all $j=2,\dots, n-1$.
	
	
%
\end{propn}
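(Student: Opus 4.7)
The plan is to build, for each $G\subseteq\{1,\ldots,n-1\}$, a piece of the co-extension via Lemma~\ref{tilde} combined with Lemma~\ref{1inG} or Lemma~\ref{1notinG}, and then to stitch these pieces into a single isometry $\Pi$. In detail, set $G':=G\cup\{n\}$ when $1\in G$ and $G':=G$ when $1\notin G$. Applying Lemma~\ref{tilde} with $G'$ in place of $G$ produces a positive operator $Q_G\in\clb(\clh)$ and an $n$-tuple $\widetilde T^G=(\widetilde T^G_1,\ldots,\widetilde T^G_n)$ of contractions on $\clh^G:=\overline{\ran}\,Q_G$ satisfying $(\widetilde T^G_j)^*Q_G=Q_G T_j^*$ and such that $\widetilde T^G_j$ is a co-isometry for every $j\notin G'$. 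A short computation using the commutativity of $T$ and the fact that $Q_G^2$ is the SOT-limit of $T_{\bar G'}^m T_{\bar G'}^{*m}$ shows $\widetilde T^G\in\fbn(\clh^G)$. Hence one may apply Lemma~\ref{1inG} (when $1\in G$) or Lemma~\ref{1notinG} (when $1\notin G$) to $\widetilde T^G$, obtaining a Hilbert space $\clh_G$, an isometry $\Gamma_G$, contractions $V_{G,1},\ldots,V_{G,n}$ on $H^2_{\clh_G}(\D^{|G|})$ and the corresponding canonical dilation map $\Pi^G$ with $(I\otimes\Gamma_G)\Pi^G(\widetilde T^G_j)^*=V_{G,j}^*(I\otimes\Gamma_G)\Pi^G$ for all $j=1,\ldots,n$.

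Setting $\Pi_G:=(I\otimes\Gamma_G)\Pi^G Q_G$ and $\Pi:=\bigoplus_G\Pi_G$, the intertwining $\Pi T_j^*=\bigl(\bigoplus_G V_{G,j}^*\bigr)\Pi$ follows immediately from $(\widetilde T^G_j)^*Q_G=Q_G T_j^*$. The structural claims on $V=(V_1,\ldots,V_n)$ transfer summand by summand: Lemmas~\ref{1inG} and~\ref{1notinG} describe each $V_{G,j}$ with $2\le j\le n-1$ as either a shift coordinate or a unitary (hence an isometry), and both lemmas guarantee that $\hat V^G_1$ and $\hat V^G_n$ are doubly commuting, a property preserved under direct sums.

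The main obstacle is proving that $\Pi$ is an isometry, that is, $\sum_G\|\Pi_G h\|^2=\|h\|^2$ for every $h\in\clh$. Using the norm identity for the canonical dilation map recalled in Section~2 together with $(\widetilde T^G_j)^*Q_G=Q_G T_j^*$ and the SOT-formula for $Q_G^2$, each $\|\Pi_G h\|^2$ expands as an alternating sum over $F\subseteq G$ of infima of the form $\inf_{\bm a}\|\prod_{i\in S}T_i^{*a_i}h\|^2$; the elementary identity $\inf_{a,b}\|T_1^{*a}T_n^{*b}h\|^2=\inf_k\|(T_1T_n)^{*k}h\|^2$, a consequence of monotonicity of each norm in every exponent, decouples the variables linked by the combinations $T_1T_n$ and $T_{\bar G'}$. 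A direct inspection of the three cases $(1\in G,\,1\in F)$, $(1\in G,\,1\notin F)$ and $(1\notin G)$ shows that every $S$ appearing either satisfies $\{1,n\}\subseteq S$ or $\{1,n\}\cap S=\emptyset$. When $\{1,n\}\cap S=\emptyset$, only the case $(1\in G,\,1\notin F)$ contributes and the total coefficient of $\inf_{\bm a}\|\prod_{i\in S}T_i^{*a_i}h\|^2$ reduces to $\sum_{H\subseteq S}(-1)^{|H|}$, which vanishes unless $S=\emptyset$; when $\{1,n\}\subseteq S$, the contributions from $(1\in G,\,1\in F)$ and $(1\notin G)$ cancel in pairs indexed by $H\subseteq S\cap\{2,\ldots,n-1\}$ (the two sums agree up to the sign that $1\in F$ introduces). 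The only surviving contribution is the $+\|h\|^2$ coming from $G=\{1,\ldots,n-1\}$, $F=\emptyset$, which yields the desired isometry.
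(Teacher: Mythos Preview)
Your proof is correct and follows essentially the same route as the paper: your $T_{\bar G'}$ coincides with the paper's $(\hat{T}_{1n})_{\bar G}$, so the operators $Q_G$, the tuples $\widetilde T^G$, and the invocation of Lemmas~\ref{1inG} and~\ref{1notinG} all match. The only difference is cosmetic: for the isometry of $\Pi$ the paper stays in the $\hat T_{1n}$-indexing and collapses everything to the single sum $\sum_{A\subseteq\{1,\dots,n-1\}}\bigl(\lim_{\bm k}\|(\hat T_{1n})_A^{*\bm k}h\|^2\bigr)\sum_{F\subseteq A}(-1)^{|F|}$, which vanishes for $A\neq\emptyset$ without any case split on whether $\{1,n\}\subseteq S$.
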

\begin{proof}
As usual, we shall work on the $(n-1)$-tuple 
 $\hat{T}_{1n}\in\mathfrak{B}^{n-1}(\clh)$. Let us fix $G=\{m_1,\ldots,m_{|G|}\}\subseteq\{1,\ldots,n-1\}$ and set $\bar{G}=\{1,\ldots,n-1\}\smallsetminus G$. Let $Q_{G}:\clh\to\clh$ be the positive operator defined by
	\[
	Q_{G}^2:=\text{SOT-}\lim_{m\to\infty}(\hat{T}_{1n}(\bar{G}))^m(\hat{T}_{1n}(\bar{G}))^{* m}.
	\]
Then, by Lemma~\ref{tilde}, we have a contraction
 $S_j:\overline{\ran}\,Q_G \to \overline{\ran}\,Q_G$ defined by 
\[
S_j^*Q_Gh=Q_G{T}_j^*h,\quad(h\in\clh)
\]
for all $j=1,\dots,n$ so that $S_j$ is a co-isometry 
for all $j\in\bar{G}$. In the case when $1\in \bar{G}$, both $S_1$ and $S_n$ are co-isometries because their product $S_1S_n$ is a co-isometry. We claim that 
 \[S:=(S_1,\ldots,S_n)\in\mathfrak{B}^n_{1,n}(\overline{\ran}\,Q_G).\] Indeed, the claim follows from the fact that 
if $\sum_{F\subset G'}
(-1)^{|F|} (\hat{T}_{1n})_{F} (\hat{T}_{1n})_{F}^*\ge 0$ for some 
$G'\subset \{1,\dots, n-1\}$ then
\[
\sum_{F\subset G'}
(-1)^{|F|} (\hat{T}_{1n})_{F}Q_G^2 (\hat{T}_{1n})_{F}^*=
\text{SOT-}\lim_{m\to\infty} \hat{T}_{1n}(\bar{G})^m\Big(\sum_{F\subset G'}
(-1)^{|F|} (\hat{T}_{1n})_{F} (\hat{T}_{1n})_{F}^*\Big) \hat{T}_{1n}(\bar{G})^{* m}\ge 0,
\] 
and the later positivity is equivalent to the positivity of $\sum_{F\subset G'}
(-1)^{|F|} (\hat{S}_{1n})_{F} (\hat{S}_{1n})_{F}^*$.
Next we apply Lemma~\ref{1inG} and Lemma~\ref{1notinG} for the 
tuple $S:=(S_1,\ldots,S_n)\in\mathfrak{B}^n_{1,n}(\overline{\ran}\,Q_G)$, to obtain the building blocks of the co-extension 
we are after. We consider the following two cases.

\textbf{Case I}:
Suppose $1=m_1\in G$. Since $S_j$ $(j\in\bar{G})$ are co-isometries, then  by Lemma \ref{1inG}, there exist
a Hilbert space $\clh_G$, an isometry $\Gamma_G: \cld_{\hat{S}_{1n},G}\to\clh_G$ and $n$-tuple of contractions
	$V=(V_{G,1},\ldots,V_{G,n})$ on $H^2_{\clh_{G}}(\D^{|G|})$ such that $\hat{V}_1$ and $\hat{V}_n$ are doubly commuting
 and 
	\[
	(I\otimes \Gamma_G)\tilde{\Pi}_GS_i^*=V_{G,i}^*(I\otimes \Gamma_G)\tilde{\Pi}_G,\
	(1\le i\le n)
	\]
where $\tilde{\Pi}_G:\overline{\mbox{ran}}\,Q_G\to H^2_{\cld_{\hat{S}_{1n}, G}}(\D^{|G|})$ is the canonical dilation map of $\hat{S}_{1n}(G)$.
Now using the identity $S_j^*Q_G=Q_GT_j^*$ and setting \[\Pi_G:=\tilde{\Pi}_GQ_G,\] we have that 
	\begin{equation}\label{int1}
	(I\otimes \Gamma_G)\Pi_GT_i^*=V_{G,i}^*(I\otimes \Gamma_G)\Pi_G,\
	(1\le i\le n)
	\end{equation}
where $(I\otimes \Gamma_G)\Pi_G :\clh \to H^2_{\clh_G}(\D^{|G|})$ 
is a contraction with 
\begin{align*}
	\|(I\otimes \Gamma_G)\Pi_Gh\|^2 &=
	\lim_{\bm{k}\to\infty}\sum_{F\subset G}(-1)^{|F|}\big\|\big(\hat{S}_{1n}\big)_F^{*\bm{k}}Q_Gh\big\|^2\\
& =\lim_{\bm{k}\to\infty}\sum_{F\subset G}(-1)^{|F|}\|Q_G(\hat{T}_{1n})_F^{* \bm{k}}h\|^2,
\end{align*}
for all $h\in\clh$.

\textbf{Case II}: Suppose $1\notin G$. In such a case, $S_{j}$ is a co-isometry for all $j\in\bar{G}\cup\{n\}$ and therefore, by Lemma \ref{1notinG}, there
exist a Hilbert space $\clh_G$  and commuting $n$-tuple of isometries 
$V'=(V_{G,1},\ldots,V_{G,n})$ on $H^2_{\clh_G}(\D^{|G|})$ such that $\hat{V'}_1$ and $\hat{V'}_n$ are doubly commuting and 
	\[
	\tilde{\Pi}_GS_{i}^*=V_{G,i}^*\tilde{\Pi}_G\ (1\le i\le n)
	\]
	where $\tilde{\Pi}_G: \overline{\mbox{ran}} Q_G \to H^2_{\clh_G}(\D^{|G|})$ is the canonical dilation map of $\hat{S}_{1n}(G)$. Again using the 
identity $S_j^*Q_G=Q_GT_j^*$ and setting \[\Pi_G:=\tilde{\Pi}_GQ_G,\] we get 
\begin{equation}\label{int2}
	\Pi_GT_i^*=V_{G,i}^*\Pi_G,\
	(1\le i\le n)
	\end{equation}
and for all $h\in\clh$,
	\[
	\|\Pi_Gh\|^2=\lim_{\bm{k}\to\infty}\sum_{F\subset G}(-1)^{|F|}\|Q_G(\hat{T}_{1n})_F^{ * \bm{k}}h\|^2.
	\]

Now we combine all the intertwining maps, obtained in the above two cases, together to obtain a co-extension.  Define
$\Pi: \clh \to\bigoplus_{G}H^2_{\clh_G}(\D^{|G|})$ by 
\[
\Pi(h)(G)=
\left  \{  
    \begin{array}{cc}
      (I\otimes \Gamma_G)\Pi_Gh & 1\in G \\
      \Pi_Gh & \text{otherwise} 
    \end{array}
\right. , \quad  (h\in\clh, G\subset \{1,\dots, n-1\}) \] 
where $\Pi(h)(G)$ denotes the $G$-th coordinate of $\Pi(h)$.
Then by ~\eqref{int1} and ~\eqref{int2}, it follows that 
\[
\Pi T_j^*=(\bigoplus_{G}V_{G,j}^* )\Pi
\]
for all $j=1,\dots,n$. Note that if $\Pi$ is an isometry then the above identity gives 
a co-extension of $T$. 
To show $\Pi$ is an isometry, for any $h\in\clh$, we compute  
\begin{align*}
	\|\Pi h\|^2=&\sum_{G\subset\{1,\ldots,n-1\}}\|\Pi_G h\|^2\quad(\text{as $\Gamma_G$'s are isometry})\\
	=&\sum_{G\subset\{1,\ldots,n-1\}}\lim_{\bm{k}\to\infty}\sum_{F\subset G}(-1)^{|F|}\|Q_G(\hat{T}_{1n})_F^{* \bm{k}}h\|^2\\
	=&\sum_{G\subset\{1,\ldots,n-1\}}\sum_{F\subset G}(-1)^{|F|}\lim_{\bm{k}\to\infty}\|T_{\bar{G}}^{* \bm{k}}(\hat{T}_{1n})_F^{* \bm{k}}h\|^2\\
	=&\sum_{A\subset\{1,\ldots,n-1\}}\lim_{  \bm{k}\to\infty}\|T_{A}^{* \bm{k}}h\|^2\sum_{F\subset A}(-1)^{|F|}.
	\end{align*}
	For each subset $A\neq\emptyset$ one can see that $\sum_{F\subset A}(-1)^{|F|}=0$. So $\|\Pi h\|^2=\|h\|^2$ for all $h\in\clh$ and hence $\Pi$ is an isometry. The moreover part is now clear from the construction 
of $V_{G,j}$'s. This completes the proof.
\end{proof}
We make several important remarks about the above proposition and these observations will be used to prove the main theorem below.

\begin{remarks}\label{crucial remark}
\textup{(i)} The main drawback of the above proposition is that the co-extension $V$ of $T$ is not a commuting tuple of contractions (see Remark~\ref{remark1} ). As noted earlier, the only problem is that $V_1$ and $V_n$ do not commute each other. However, there are several things which are nice and help us to work further to find 
isometric dilation of $T$. For example, $V_j$ is an isometry for all $j=2,\dots, n-1$ and if we set 
\[
V_0=\bigoplus_{G}V_{G,0}\in
 \clb\big(\bigoplus_{G}H^2_{\clh_G}(\D^{|G|})\big),\  \text{where } V_{G,0}=\left\{
\begin{array}{cc}
M_{z_1} &  1\in G\\
V_{G,1}V_{G,n} & \text{otherwise}
\end{array}
\right .  ,
\]
then $V_0$ is an isometry, $(V_0,V_1,\dots,V_{n-1})$ is an $n$-tuple of commuting
 contractions with 
 \[
 \Pi T_1^*T_n^*= V_0^*\Pi\ \text{and } \Pi T_j^*= V_j^*\Pi\ (1\le j\le n-1).
 \]
 and, in view of ~\eqref{the identity},
 \begin{equation}\label{the identity II}
 V_1^*V_0=V_n\  \text{ and } V_n^*V_0=V_1.
 \end{equation}
Another crucial fact is that $(V_1,\dots, V_{n-1})$ and $(V_0, V_2,\dots, V_{n-1})$ are doubly commuting. In other words, the doubly commuting tuple $(V_2,\dots, V_{n-1})$ doubly commute with both $V_0$ and $V_1$.
Needless to say that a similar statement also can be made about the 
commuting tuple of contractions $(V_0,V_2,\dots, V_n)$.

 \textup{(ii)} If $\clq:=\ran\, \Pi$, then $\clq$ is a joint $(V_0^*, V_1^*,\dots,V_n^*)$-invariant subspace of $\bigoplus_{G}H^2_{\clh_G}(\D^{|G|})$, $(P_{\clq}V_1|_{\clq}, \dots, P_{\clq}V_{n}|_{\clq})$ is an
 $n$-tuple of  commuting contractions and 
 \[
 ( T_1,\dots, T_{n})\cong
  (P_{\clq}V_1|_{\clq},\dots, P_{\clq}V_{n}|_{\clq}).
 \]
 Moreover, 
 \[(P_{\clq}V_1|_{\clq})( P_{\clq}V_{n}|_{\clq})=(P_{\clq}V_n|_{\clq})( P_{\clq}V_{1}|_{\clq})=P_{\clq}V_0|_{\clq}.\]
 \end{remarks}
 We now find isometric dilations of operator tuples in $\mathfrak{B}^n_{1,n}(\clh)$, which is the main theorem of this article.
\begin{thm}\label{dil}
Let $T=(T_1,\ldots,T_n)\in\mathfrak{B}^n_{1,n}(\clh)$. Then $T$ has an isometric dilation $W=(W_1,\ldots,W_n)$ such that $\hat{W}_1$ and $\hat{W}_n$ are $*$-regular isometric dilations of $\hat{T}_1$ and $\hat{T}_n$, respectively.
\end{thm}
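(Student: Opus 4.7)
My plan is to bootstrap the theorem from the co-extension produced in Proposition~\ref{predil}. Let $V=(V_1,\ldots,V_n)$ on $\clk=\bigoplus_G H^2_{\clh_G}(\D^{|G|})$ and $\Pi:\clh\to\clk$ be as in that proposition, so that $\Pi T_j^*=V_j^*\Pi$ for every $j$, the subtuples $\hat V_1$ and $\hat V_n$ are doubly commuting, and $V_j$ is an isometry for $2\le j\le n-1$. The only obstruction to $V$ being an isometric dilation is that $V_1$ and $V_n$ need not be isometries and need not commute with each other. I will resolve this obstruction summand by summand by passing to a larger Hardy space.

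Concretely, fix $G\subseteq\{1,\ldots,n-1\}$. If $1\in G$, Lemma~\ref{1inG} realises $V_{G,1}$ and $V_{G,n}$ as the compressions $P_{H^2_{\clh_G}}M_\Phi|_{H^2_{\clh_G}}$ and $P_{H^2_{\clh_G}}M_\Psi|_{H^2_{\clh_G}}$ of commuting isometric multiplications on the ambient space $H^2_{\clk_G}(\D^{|G|})$ with $\Phi\Psi=\Psi\Phi=z_1 I$. I promote $V_{G,m_i}=M_{z_i}$ unchanged and extend each $V_{G,j}=I\otimes W_j$ (for $j\in\bar G$) to $W_{G,j}:=I\otimes W_j^{\#}$ on $H^2_{\clk_G}(\D^{|G|})$, where $W_j^{\#}$ is a unitary extension of $W_j$ to $\clk_G$ chosen so that $\{W_j^{\#}\}_{j\in\bar G}$ is commuting and commutes with the BCL unitary $U$; by enlarging the auxiliary space added in Lemma~\ref{1inG}, the intertwining $U''W_j^{'*}=W_j^{'*}U''$ already valid on $\clh_G$ can be lifted to such compatible unitaries on all of $\clk_G$. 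On summands with $1\notin G$, Lemma~\ref{1notinG} already delivers a commuting tuple of isometries, which I keep unchanged. Setting $\clk'_G:=\clk_G$ when $1\in G$ and $\clk'_G:=\clh_G$ otherwise, the global tuple $W_j:=\bigoplus_G W_{G,j}$ on $\clk':=\bigoplus_G H^2_{\clk'_G}(\D^{|G|})$ is an $n$-tuple of commuting isometries.

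To see that $W$ is an isometric dilation of $T$, let $\iota:\clh\to\clk'$ be the isometry obtained by composing $\Pi$ with the natural inclusion $\clk\hookrightarrow\clk'$. I must verify $W_j^*\iota=\iota T_j^*$ for every $j$. For $2\le j\le n-1$ this is immediate, since $W_j$ restricts to $V_j$ on $\clk$. For $j=1$ the identity $M_\Phi^*(I\otimes\Gamma_G)\Pi_G h=(I\otimes\Gamma_G)\Pi_G T_1^* h$ from Lemma~\ref{1inG} holds inside $H^2_{\clh_G}(\D^{|G|})$; regarding the input as a vector in the larger $H^2_{\clk_G}(\D^{|G|})$, the output of $M_\Phi^*$ lies again in $H^2_{\clh_G}(\D^{|G|})$, so the intertwining persists on the bigger space. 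The analogous statement holds for $j=n$ via $M_\Psi$. Commutativity of $W$ now gives, for every $\bm k\in\Z_+^n$ and $h,h'\in\clh$,
\[
\langle T^{\bm k}h,h'\rangle=\langle\iota h,\iota T^{*\bm k}h'\rangle=\langle\iota h,W^{*\bm k}\iota h'\rangle=\langle W^{\bm k}\iota h,\iota h'\rangle,
\]
i.e.\ $T^{\bm k}=P_\clh W^{\bm k}|_\clh$.

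Finally, the doubly commuting property of $\hat V_n$ transfers to $\hat W_n=(W_1,\ldots,W_{n-1})$ on $\clk'$ thanks to the compatible extensions chosen above, and $\hat W_n$ is an isometric co-extension of $\hat T_n$ by the intertwining just established. For $\bm\alpha\in\Z^{n-1}$ and $h\in\clh$, doubly commuting lets me swap positive and negative power factors and then use the co-extension property:
\[
\hat W_n^{*\bm\alpha_-}\hat W_n^{\bm\alpha_+}h=\hat W_n^{\bm\alpha_+}\hat W_n^{*\bm\alpha_-}h=\hat W_n^{\bm\alpha_+}\hat T_n^{*\bm\alpha_-}h,
\]
and projecting onto $\clh$ gives $\hat T_n^{\bm\alpha_+}\hat T_n^{*\bm\alpha_-}h$; hence $\hat W_n$ is $*$-regular. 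The same argument applied to the doubly commuting tuple $\hat V_1$ gives the $*$-regularity of $\hat W_1$. The main obstacle throughout is the compatible extension step: one must ensure that $U$ and the $W_j^{\#}$'s can be jointly extended to $\clk_G$ while preserving every relevant doubly-commuting relation, for which a simultaneous intertwining-lifting argument in the spirit of Lemma~\ref{doubly commuting lifting} is the key technical input.
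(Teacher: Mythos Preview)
Your approach diverges from the paper's, and the step you yourself flag as ``the main obstacle'' is a genuine gap that you do not close. You want to extend each $W_j$ ($j\in\bar G$) from $\clh_G$ to a unitary $W_j^{\#}$ on all of $\clk_G$ that simultaneously (i) commutes with the BCL unitary $U$, (ii) commutes with the projection $P$ used to build $\Phi$ and $\Psi$, and (iii) commutes with every other $W_k^{\#}$. Lemma~\ref{doubly commuting lifting} does \emph{not} furnish this: that lemma lifts isometries doubly commuting with a pair $(T_1,T_2)$ through a fixed minimal regular unitary dilation of the pair, whereas here you need to jointly enlarge the space and extend both the partial unitary $U'':\clh_G\to\tilde\clh_G$ and a commuting tuple of unitaries so that all commutation relations survive. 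Such a compatible extension can be arranged---after enlarging the auxiliary summand $\clk$ by infinitely many copies of the restrictions of the $W_j'$ to $\clk'_G\ominus\clh_G$ and to $\clk'_G\ominus\tilde\clh_G$, the two complementary tuples become unitarily equivalent and the required $U$ exists---but this is a separate absorption argument, not an instance of Lemma~\ref{doubly commuting lifting}, and it is precisely what your sketch omits.

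The paper sidesteps this difficulty entirely. Rather than enlarging the fibres $\clh_G\hookrightarrow\clk_G$, it introduces the isometry $V_0=\bigoplus_G V_{G,0}$ (equal to $M_{z_1}$ on the $1\in G$ summands) so that $(V_0,V_1,\dots,V_{n-1})$ is commuting with $(V_2,\dots,V_{n-1})$ doubly commuting with both $V_0$ and $V_1$. Since $V_0$ is an isometry, the pair $(V_0,V_1)$ admits a minimal regular unitary dilation $(W_0,W_1)$; now Lemma~\ref{doubly commuting lifting} applies \emph{verbatim} to lift $V_2,\dots,V_{n-1}$ to isometries $W_2,\dots,W_{n-1}$, and one sets $W_n:=W_1^*W_0$. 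The identities $V_1^*V_0=V_n$ and $V_n^*V_0=V_1$ from Remark~\ref{crucial remark} then reduce the dilation property for arbitrary monomials to a short case split on whether $k_1\ge k_n$ or $k_1<k_n$. In short, the paper dilates only the single troublesome pair and carries the remaining isometries along, which is exactly what makes Lemma~\ref{doubly commuting lifting} applicable; your route instead attempts to repair each summand individually, and that is where the unproved extension step enters.
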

\begin{proof}
Let $\Pi$,
$(V_{G,1},\ldots,V_{G,n})$ on $H^2_{\clh_G}(\D^{|G|})$
and $V=(V_1,\dots, V_n)$ be as in Proposition~\ref{predil}. Then $V=(V_1,\dots,V_n)$ is a co-extension of $T$, that is 
\[
\Pi T_j^*= V_j^*\Pi \quad (1\le j\le n)
\]
where $V_j=\bigoplus_{G}V_{G,j}$ for all $j=1,\dots,n$. We set 
\[
V_{0}:=\bigoplus_{G}V_{G,0}\in
 \clb\big(\bigoplus_{G}H^2_{\clh_G}(\D^{|G|})\big),\  
 \text{where } V_{G,0}=\left\{
\begin{array}{cc}
M_{z_1} &  1\in G\\
V_{G,1}V_{G,n} & \text{otherwise}
\end{array}
\right . .
\]
As we have noted in Remark~\ref{crucial remark}, 
$(V_0, V_1\dots, V_{n-1})$ is a commuting tuple with $V_i$'s are isometry 
except $V_1$ and the doubly commuting tuple $(V_2,\dots, V_{n-1})$ doubly commute with both $V_0$ and $V_1$.  Since $V_0$
is an isometry, then the pair $(V_0, V_1)$ has a regular unitary dilation 
$(W_0, W_1)$ on $\clk$. Then by Lemma~\ref{doubly commuting lifting},  we extends $V_j$ to an isometry $W_j$ on $\clk$, for all $j=2,\dots,n-1$, such that the tuple $(W_0,W_1,\dots, W_{n-1})$ on $\clk$ is an isometric dilation 
of $(V_0, V_1,\dots, V_{n-1})$ on $\bigoplus_{G}H^2_{\clh_G}(\D^{|G|})$. 
Set 
\[
W_n:=W_1^*W_0.
\]
Then clearly $(W_1,\dots, W_n)$ is a commuting tuple of isometries. We
claim that $(W_1,\dots, W_n)$ is an isometric dilation of $(P_{\clq}V_1|_{\clq}, \dots, P_{\clq}V_{n}|_{\clq})$, where $\clq=\mbox{ran}\, \Pi$.
To prove the claim, let $k=(k_1,\dots,k_n)\in \mathbb Z^n_{+}$ and 
let us denote $\clx:=\bigoplus_{G}H^2_{\clh_G}(\D^{|G|})$. We divide 
the proof of the claim in the following two cases.  

\textbf{Case I}:
If $k_n\ge k_1$, then
\begin{align*}
P_{\clx}W_1^{k_1}\cdots W_n^{k_n}|_{\clx} & =P_{\clx}W_0^{k_n}W_1^{* (k_n-k_1)}\cdots W_{n-1}^{k_{n-1}}|_{\clx}\\
& =( P_{\clx}W_{1}^{* (k_n-k_1)}W_0^{k_n}|_{\clx})(W_2^{k_2}\cdots W_{n-1}^{k_{n-1}}|_{\clx} )\ \  [W_j\clx\subset \clx, j=2,\dots,n-1]\\
&=V_1^{* (k_n-k_1)}V_0^{k_n}V_2^{k_2}\cdots V_{n-1}^{k_{n-1}}\ \ [(W_0,W_1) \text{is a regular dilation of } (V_0,V_1)]\\
&=(V_1^{*}V_0)^{(k_n-k_1)}V_0^{k_1} V_2^{k_2}\cdots V_{n-1}^{k_{n-1}}\\
& =V_n^{(k_n-k_1)}V_0^{k_1}V_2^{k_2}\cdots V_{n-1}^{k_{n-1}},\ [\text{by } ~\eqref{the identity II}]
\end{align*}
and therefore using the fact that $\clq\subset \clx$ is a joint
 $(V_0^*, V_1^*,\dots,V_n^*)$-invariant subspace
\begin{align*}
P_{\clq}W_1^{k_1}\cdots W_n^{k_n}|_{\clq}&=P_{\clq}V_n^{(k_n-k_1)}V_0^{k_1}V_2^{k_2}\cdots V_{n-1}^{k_{n-1}}|_{\clq}\\
&=(P_{\clq}V_n|_{\clq})^{(k_n-k_1)}( P_{\clq} V_0|_{\clq})^{k_1}
(P_{\clq}V_2|_{\clq})^{k_2}\cdots (P_{\clq}V_{n-1}|_{\clq})^{k_{n-1}}\\
&=(P_{\clq}V_1|_{\clq})^{k_1}\cdots (P_{\clq}V_{n}|_{\clq})^{k_{n}}.
\end{align*}
Here for the last equality we have used that $P_{\clq}V_0|_{\clq}=(P_{\clq}V_1|_{\clq})(P_{\clq}V_n|_{\clq})$.

\textbf{Case II}: If $k_1> k_n$, then 
\begin{align*}
P_{\clx}W_1^{k_1}\cdots W_n^{k_n}|_{\clx} & =P_{\clx}W_0^{k_n}W_1^{k_1-k_n}\cdots W_{n-1}^{k_{n-1}}|_{\clx}\\
&=V_0^{k_n}V_1^{k_1-k_n}\cdots V_{n-1}^{k_{n-1}},
\end{align*}
and therefore,
\begin{align*}
P_{\clq}W_1^{k_1}\cdots W_n^{k_n}|_{\clq}&
=P_{\clq}V_0^{k_n}V_1^{k_1-k_n}\cdots V_{n-1}^{k_{n-1}}|_{\clq}\\
&=(P_{\clq}V_1|_{\clq})^{k_1}\cdots (P_{\clq}V_n|_{\clq})^{k_n}.
\end{align*}
This proves the claim. On the other hand, we have already observed in Remark~\ref{crucial remark}
that
\[
 ( T_1,\dots, T_{n})\cong
  (P_{\clq}V_1|_{\clq},\dots, P_{\clq}V_{n}|_{\clq}).
\]
Hence $(W_1,\dots, W_n)$ is an isometric dilation of $(T_1,\dots, T_n)$. Since the tuple $(W_2,\dots, W_{n-1})$ is a co-extension of $(V_2,\dots, V_{n-1})$ and $(V_2,\dots, V_{n-1})$ is a co-extension of 
$(T_2,\dots, T_{n-1})$, $(W_2,\dots, W_{n-1})$ is a co-extension of 
$(T_2,\dots, T_{n-1})$. Finally since $\hat{W}_1$ and $\hat{W}_n$ are
tuples of doubly commuting isometries, it follows that $\hat{W}_1$ and $\hat{W}_n$ are $*$-regular isometric dilation of $\hat{T}_1$ and $\hat{T}_n$, respectively. This completes the proof.


\end{proof}	
Few remarks are in order.

\begin{remarks}
\textup{(i)} The converse of the above theorem is true. That is, if  $T\in\clt^n(\clh)$ has an isometric dilation $W\in\clt^n(\clk)$ so that 
$\hat{W}_1$ and $\hat{W}_n$ are $*$-regular isometric dilation 
of $\hat{T}_1$ and $\hat{T}_2$ respectively, then $T\in\mathfrak{B}^n_{1,n}(\clh)$. This immediately follows from the
fact that an operator tuple satisfies Brehmer positivity if and only if it 
has a $*$-regular isometric dilation.

\textup{(ii)} If $T\in\mathfrak{B}^n_{p,q}(\clh)$ then interchanging $T_p$ with $T_1$ and $T_q$ with $T_n$ we can assume $T\in\mathfrak{B}^n_{1,n}(\clh)$. So, Theorem \ref{dil} provides dilations for tuples in $\mathfrak{B}^n_{p,q}(\clh)$.
	
\textup{(iii)} If $T\in\mathfrak{B}^n_{1,n}(\clh)$ such that $\h_n$ is pure then $T$ is also a member of the class $\clt^n_{1,n}(\clh)$ (see ~\eqref{tnpq}) considered in ~\cite{BDHS}.  Note that in this case $\hat{T}_{1n}$ is also a pure tuple. 
Then the positive operator $Q_G$, defined in the proof of Proposition~\ref{predil}, is $0$ for all $G\subsetneq \{1,\dots,n-1\}$ and 
 $Q_{G}=I_{\clh}$ for $G=\{1,\dots,n-1\}$. This implies $\Pi_G=0$
 for all $G\subsetneq\{1,\dots,n-1\}$ and $\Pi_G$ is an isometry 
 for $G=\{1,\dots,n-1\}$. Then it follows from Lemma~\ref{1inG} and 
 Proposition~\ref{predil} that the commuting tuple of isometries 
 $ (M_{\Phi}, M_{z_2},\dots, M_{z_{n-1}}, M_{\Psi}) $ on 
 $H^2_{\clk_G}(\D^{n-1})$ is a co-extension of $T$. Thus we recover 
 the isometric dilations obtained in ~\cite{BDHS} for such tuples. 
\end{remarks}

 We end the section with the von Neumann inequality for $\mathfrak{B}^n_{p,q}(\clh)$. Recall that, if an $n$-tuple of commuting contraction has an isometric dilation then it satisfies von Neumann inequality. So we have the following theorem as an immediate consequence of Theorem~\ref{dil}.
\begin{thm}
	Let $T\in\mathfrak{B}^n_{r,q}(\clh)$ with $1\le r<q\le n$. Then, 
	\[
	\|p(T)\|_{\clb(\clh)} \leq \sup_{\z \in \D^n} |p(\z)|
	\]
	for all $p\in\mathbb{C}[z_1,\ldots,z_n]$.
\end{thm}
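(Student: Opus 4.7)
The plan is to reduce directly to Theorem~\ref{dil} and then to the classical von Neumann inequality for commuting unitaries. First I would invoke the remark (already made just after Theorem~\ref{dil}) that a suitable permutation of coordinates carries any $T\in\mathfrak{B}^n_{r,q}(\clh)$ to a tuple in $\mathfrak{B}^n_{1,n}(\clh)$, and that this permutation merely relabels the variables in the polynomial $p$ without affecting either $\|p(T)\|$ or $\sup_{\z\in\D^n}|p(\z)|$. So I may assume $(r,q)=(1,n)$.

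Next, by Theorem~\ref{dil} there exists a commuting tuple of isometries $W=(W_1,\ldots,W_n)$ on some $\clk\supset\clh$ with
\[
T^{\bm k}=P_{\clh}W^{\bm k}|_{\clh}\quad(\bm k\in\Z^n).
\]
Taking linear combinations over monomials, for every $p\in\mathbb{C}[z_1,\ldots,z_n]$ we get $p(T)=P_{\clh}\,p(W)|_{\clh}$, hence $\|p(T)\|_{\clb(\clh)}\le \|p(W)\|_{\clb(\clk)}$.

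To bound $\|p(W)\|$, I would extend the commuting isometric tuple $W$ to a commuting unitary tuple $U=(U_1,\ldots,U_n)$ on a larger Hilbert space $\clk'\supset\clk$; this is the standard Ito extension of a commuting family of isometries to a commuting family of unitaries. Since $W_i$ is the restriction of $U_i$ to a joint invariant subspace, $\|p(W)\|\le\|p(U)\|$. Finally, the joint spectrum of the commuting unitaries $U$ lies in $\mathbb{T}^n\subset\overline{\D^n}$, so by the continuous functional calculus for commuting normal operators,
\[
\|p(U)\|=\sup_{\z\in\sigma_{\mathrm T}(U)}|p(\z)|\le\sup_{\z\in\mathbb{T}^n}|p(\z)|=\sup_{\z\in\D^n}|p(\z)|,
\]
where the last equality is the maximum modulus principle applied on the polydisc. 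Chaining the three inequalities yields the claim.

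There is essentially no obstacle here: the nontrivial work has already been done in Theorem~\ref{dil}. The only points to be careful about are (i) the permutation argument reducing $(r,q)$ to $(1,n)$, and (ii) citing (or briefly recalling) the Ito theorem that a commuting tuple of isometries has a commuting unitary extension, so that the maximum modulus principle can be applied to transfer the bound from $\mathbb{T}^n$ to $\D^n$.
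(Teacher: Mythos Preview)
Your argument is correct and is essentially the same as the paper's: the paper simply states that the theorem is an immediate consequence of Theorem~\ref{dil} together with the well-known fact that any commuting tuple admitting an isometric dilation satisfies the von Neumann inequality, and you have just unpacked that implication (permutation to reduce to $(1,n)$, compression inequality, Ito's unitary extension, and the maximum modulus principle). There is nothing to add.
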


\vspace{0.1in} \noindent\textbf{Acknowledgement:}  The research of the second named author is supported by
DST-INSPIRE Faculty Fellowship No. DST/INSPIRE/04/2015/001094.

\bibliographystyle{plain}

\end{document}